\documentclass[11pt, twoside, leqno]{amsart}  
\makeatletter
\@namedef{subjclassname@2020}{\textup{2020} Mathematics Subject Classification}
\makeatother

\usepackage{lipsum}
\usepackage{amsfonts}
\usepackage{graphicx}
\usepackage{epstopdf}
\usepackage{calligra}
\usepackage{amsfonts,amsmath,amsthm}
\usepackage{mathtools}
\usepackage{amssymb}

\usepackage{hhline}
\usepackage{array}
\usepackage{diagbox}
\usepackage{tcolorbox}
\usepackage{mdframed}
\usepackage{multicol}
\usepackage{graphicx}
\usepackage{subcaption}
\usepackage{moreverb}
\usepackage{bbm}
\usepackage[margin=1.25in]{geometry}
\usepackage{scalerel}
\allowdisplaybreaks
\usepackage{mathrsfs}  
\usepackage{lineno}
\usepackage{todonotes}
\usepackage{tikz}
\usepackage{appendix}
\usepackage{enumitem}
\usepackage{pgfplots}
\usetikzlibrary{arrows.meta}
\usepackage{siunitx}
\usepackage[numbers,sort&compress]{natbib}
\definecolor{mygreen}{HTML}{43a047}
\usepackage{subcaption}
\usepackage{doi}
\usepackage{soul}
\usepackage[misc]{ifsym}
\usepackage{siunitx}
\usepackage{dsfont}
\usepackage{siunitx}
\definecolor{darkgreen}{rgb}{0.0, 0.2, 0.13}
\newtheorem{theorem}{Theorem}

\newtheorem{proposition}{Proposition}
\theoremstyle{definition}

\theoremstyle{remark}

\newcommand{\Om}{\Omega}

\newcommand{\betaa}{\beta_{\textup{a}}}

\newcommand{\Ga}{\Gamma_{\textup{abs}}}
\newcommand{\ut}{u_t}
\newcommand{\utt}{u_{tt}}

\newcommand{\ds}{\, \textup{d} s }
\newcommand{\dx}{\, \textup{d} \boldsymbol{x}}

\newcommand{\dG}{\, \textup{d} \Gamma}
\newcommand{\dGs}{\, \textup{d} \Gamma \textup{d}s}

\newcommand{\dxs}{\, \textup{d}\boldsymbol{x}\textup{d}s}

\newcommand{\inttO}{\int_0^t \int_{\Omega}}
\newcommand{\intt}{\int_0^t}
\newcommand{\intT}{\int_0^T}
\newcommand{\intO}{\int_{\Omega}}

\newcommand{\R}{\mathbb{R}} 
 
\newcommand{\Htwo}{H^2(\Omega)}
\newcommand{\Ltwo}{L^2(\Omega)}
\newcommand{\Linf}{L^\infty(\Omega)}

\newcommand{\LinfLinf}{L^\infty(0,T; L^\infty(\Omega))}
\newcommand{\LinfTLinf}{L^\infty(0,T; L^\infty(\Omega))}

\def\Linf{L^\infty(\Omega)}

\newcommand{\Ctr}{C_{\textup{tr}}}

\definecolor{grey}{rgb}{0.5,0.5,0.5}

\definecolor{darkgreen}{rgb}{0,0.5,0}

\def\ball{\mathbb{B}_h}

\def\calE{\mathcal{E}}

\def\uzeroh{u_{0}^h}
\def\uoneh{u_{1}^h}
\def\utwoh{u_{2}^h}

\def\Xu{\mathcal{X}_u}

\def\eps{\varepsilon}

\def\LtwoTLtwo{L^2(0,T; \Ltwo)}

\def\LinftLinf{L^\infty(0,t; \Linf)}

\def\csq{c^2}

\def\LtwoG{L^2(\Gamma)}
\def\LtwoGabs{L^2(\Gamma_{\textup{abs}})}

\def\LtwoGN{L^2(\Gamma_{\textup{N}})}

\def\uzero{u_0}
\def\uone{u_1}

\def\LtwotLtwo{L^2(0,t; \Ltwo)}

\newcommand{\Gronwall}{Gr\"onwall}

\def\lhs{\textup{lhs}}
\def\rhs{\textup{rhs}}

\newif\ifshow

\def\uttt{u_{ttt}}

\def\utwo{u_2}
\def\ah{a^{\textup{sip}}_h}

\def\uht{\uh_t}

\def\uhtt{\uh_{tt}}

\def\Ih{\mathcal{I}_h}

\def\Deltat{{\mbox{\footnotesize{$\Delta$}}} t}

\def\uhn{\uh_{n}}
\def\udothn{\dot{u}^{h}_{n}}
\def\udotdothn{\ddot{u}^{h}_{n}}

\def\uhnplusone{u^{h}_{n+1}}

\def\udothnplusone{\dot{u}^{h}_{n+1}}
\def\udotdothnplusone{\ddot{u}^{h}_{n+1}}

\def\uprev{\uh_{\textup{prev}}}
\def\udotprev{\dot{u}^h_{\textup{prev}}}

\usepackage{stmaryrd}
\newcommand{\jump}[1]{\llbracket #1 \rrbracket}
\newcommand{\avg}[1]{\{ \! \! \{ #1 \} \! \! \} }

\newcommand{\triangles}{\mathcal{T}_{h}}

\newcommand{\Ltwonorm}[2]{\|#1\|_{L^2(#2)}}

\newcommand{\dgstarseminorm}[1]{|#1|_{\textup{sip,*}}}

\newcommand{\dgseminorm}[1]{| #1 |_{\textup{sip}}}

\newcommand{\invconstant}{C_{\textup{inv}}}

\def\zhnplusone{z^h_{n+1}}
\def\zdothnplusone{\dot{z}^h_{n+1}}
\def\zdotdothnplusone{\ddot{z}^h_{n+1}}

\def\zhprev{z^h_{\textup{prev}}}
\def\zdothprev{\dot{z}^h_{\textup{prev}}}
\def\zhn{z^h_{n}}
\def\zdothn{\dot{z}^h_{n}}
\def\zdotdothn{\ddot{z}^h_{n}}

\def\calO{\mathcal{O}}

\newcommand{\asip}{a^{\textup{sip}}_{h}}

\newcommand{\ph}{p^{h}}
\newcommand{\qh}{q^{h}}

\newcommand{\uh}{u^{h}}
\newcommand{\fh}{f^{h}}

\newcommand{\fhnplusone}{f^{h}_{n+1}}

\newcommand{\ghabsnplusone}{g^{h}_{\textup{abs}, n+1}}
\newcommand{\ghNnplusone}{g^{h}_{\textup{N}, n+1}}

\newcommand{\Vh}{V^q_{h}}

\newcommand{\phih}{\phi^{h}}
\newcommand{\psih}{\psi^{h}}

\newcommand{\eI}{e^{I}}

\newcommand{\eh}{e^{h}}
\newcommand{\thh}{t^{*}_{h}}

\def\bfn{\boldsymbol{n}}

\def\Vhqp{V_h^q}

\def\qp{q}

\def\calFh{\mathcal{F}_h}
\def\calFhint{\mathcal{F}^{\textup{int}}_h}
\def\calFhbnd{\mathcal{F}^{\textup{bnd}}_h}
\def\calFhbndabs{\mathcal{F}^{\textup{abs, bnd}}_h}
\def\calFhbndN{\mathcal{F}^{\textup{N, bnd}}_h}
\def\hF{h_F}

\def\nF{\boldsymbol{n}_F}

\def\nablah{\nabla_h}

\def\intF{\int_F}

\def\Ahsip{A^{\textup{sip}}_h}

\def\Gh{G^h}

\def\Vhq{V_h^q}

\def\bfx{\boldsymbol{x}}

\def\LtwoK{L^2(K)}

\def\hK{h_K}

\def\VGh{V^q_{\Gamma, h}}

\def\VGhabs{V^q_{\Gabs, h}}
\def\VGhN{V^q_{\GN, h}}

\def\Npartial{N_{\partial}}

\def\interpolant{\mathcal{I}_h}
\def\locainterpolant{\mathcal{I}_{h}^K}

\def\ehu{e^{h}}

\def\deltahp{\delta^h}

\def\eIu{e^{I}}

\def\openset{\mathcal{M}_h}

\def\ball{\mathcal{B}}

\def\betanm{\beta_{\textup{Nm}}}
	\def\gammanm{\gamma_{\textup{Nm}}}
		\def\deltanm{\delta_{\textup{Nm}}}

\newcommand{\tnorm}[1]{\| #1 \|_{\mathcal{E}}}
\def\Czero{C_{0}}

\def\dGnorm{\textup{sip}}

\def\fhtauzero{f^{h, \tau=0}}

\def\ghabstauzero{g_{\textup{abs}}^{h, \tau=0}}
\def\ghNtauzero{g_{\textup{N}}^{h, \tau=0}}

\usepackage{etoolbox}
\makeatletter
\apptocmd{\maketitle}{%
	\@addtoreset{equation}{section}%
}{}{}
\makeatother

\def\closedball{\overline{\ball}}

\def\Lambdah{\Lambda_h}

\def\deltah{\delta^h}

\def\etanm{\eta_{\textup{Nm}}}

\def\gabs{g_{\textup{abs}}}
\def\gabstauzero{g^{\tau=0}_{\textup{abs}}}
\def\ghabs{g^h_{\textup{abs}}}
\def\gN{g_{\textup{N}}}
\def\gNtauzero{g^{\tau=0}_{\textup{N}}}
\def\ghN{g^h_{\textup{N}}}

\newtheorem{lemma}{Lemma}

\def\Gabs{\Gamma_\textup{abs}}
\def\GN{\Gamma_\textup{N}}

\def\ztt{z_{tt}}

\newenvironment{eq}
{\begin{equation}\begin{aligned}}
		{\end{aligned}\end{equation}
}

\def\btau{b_\tau}
\def\zh{z^h}

\def\ehut{ \ehu_{t}}
\def\ehutt{ \ehu_{tt}}

\def\utauzero{u^{\tau=0}}
\def\uttauzero{u_t^{\tau=0}}

\def\uhtauzero{u^{h, \tau=0}}
\def\uhttauzero{u_t^{h, \tau=0}}

\def\udiff{\bar{u}}
\def\utdiff{\bar{u}_t}
\def\uttdiff{\bar{u}_{tt}}

\def\uhttt{u^h_{ttt}}

\def\intGabs{\int_{\Gabs}}
\def\intGN{\int_{\GN}}

\newcommand{\energyseminorm}[1]{|#1|_{\textup{E}}}

\def\betanl{\beta_{\textup{nl}}}

\def\bark{\bar{k}}

\def\Linfnormk{\|k\|_{\Linf}}

\def\bartau{\bar{\tau}}
\def\ftauzero{f^{\tau=0}}   

\title[DG approximations of the JMGT equation in the vanishing relaxation limit]{Discontinuous Galerkin approximations of the Jordan--Moore--Gibson--Thompson equation in the vanishing relaxation limit}
\subjclass[2020]{65M12, 65M60}

\keywords{JMGT equation, discontinuous Galerkin methods, vanishing thermal relaxation, Newmark scheme}

\author[Vanja Nikoli\'c]{Vanja Nikoli\'c}

\address{ 
	Department of Mathematics \\ 
	Radboud University   \\ 
	Heyendaalseweg 135,
	6525 AJ Nijmegen, The Netherlands}
\email{vanja.nikolic@ru.nl} 

\begin{document}
\vspace*{8mm}
\begin{abstract}
The Jordan--Moore--Gibson--Thompson (JMGT) equation models nonlinear acoustic wave propagation in thermally relaxing media and in the vanishing relaxation limit approaches the damped Westervelt equation. We investigate discontinuous Galerkin spatial discretizations of the JMGT equation on simplicial meshes and analyze their behavior uniformly with respect to the relaxation parameter. Under practically relevant mixed Neumann and absorbing boundary conditions, we derive \emph{a priori} error estimates independent of the relaxation parameter. These estimates enable a rigorous singular limit analysis, yielding convergence of the semi-discrete JMGT approximations to the corresponding Westervelt pressure profile at a linear rate.  This also sheds light on the expected behavior of exact solutions in the vanishing relaxation limit. For the fully discrete problem, we propose a Newmark-type method based on a reformulation as a coupled second-/first-order system. Numerical experiments support the theoretical findings and demonstrate the robustness of the approach in the small-parameter regime.
\end{abstract}
\vspace*{-7mm}
\maketitle           
\vspace*{-5mm}
\section{Introduction}
\label{sec:1}
As sound waves propagate through thermally relaxing fluids, the evolution of the acoustic pressure field can be described by the Jordan--Moore--Gibson--Thompson (JMGT) equation~\cite{jordan2014second}:
\begin{equation} \label{JMGT eq} \tag{JMGT}
	\tau \uttt + ((1+k(\bfx)u)\ut)_t - \csq \Delta u - \btau \Delta \ut = f.
\end{equation}
The parameter $\tau>0$ denotes the thermal relaxation time appearing in the underlying Maxwell--Cattaneo law of heat conduction and is responsible for the third-order-in-time character of the model. In many media $\tau$ is extremely small; for example, in air it is of the order of hundreds of picoseconds~\cite{jordan2014second}. In media such as biological tissue, however, the values might be significantly higher~\cite{afrin2011thermal}. This wide range of physically relevant parameters makes it paramount to understand under which conditions solutions remain robust with respect to $\tau$. Closely related to this goal is the study of the singular perturbation limit $\tau\to0$, where, under suitable assumptions, solutions of the JMGT equation converge to solutions of the damped Westervelt equation:
\begin{equation} \label{West eq} \tag{W}
	((1+k(\bfx)\utauzero)\uttauzero)_t - \csq \Delta \utauzero - b \Delta \uttauzero = \ftauzero;
\end{equation}
see, e.g.,~\cite{kaltenbacher2020vanishing, nikolic2024nonlinear, bongarti2021vanishing, kaltenbacher2019jordan}.  The present work addresses both of these aspects at the level of numerical approximations. \\
\indent In \eqref{JMGT eq},  $c>0$ is the speed of sound and $\btau = b+\csq\tau$, where $b>0$ is the coefficient accounting for the viscosity of the medium.  The term $f=f(\bfx,t)$ represents an external acoustic source. Furthermore, $k(\bfx)=\frac{\betanl(\bfx)}{\rho \csq}$, where $\betanl=\betanl(\bfx)$ denotes the spatially varying nonlinearity parameter of the medium and $\rho$ its density. Allowing $k$ to vary spatially enables the modeling of heterogeneous media with differing strengths of nonlinearity and includes the linearized case $k=0$, which is commonly referred to as the Moore--Gibson--Thompson (MGT) equation.  In fact, the linearized equation can be traced back to the works of Stokes~\cite{stokes1851xxxviii}, Moore and Gibson~\cite{moore1960propagation}, and Thompson~\cite{thompson1972compressible}. Related linear models for acoustic propagation in inhomogeneous relaxing media were considered by
Nachman, Smith, and Waag~\cite{nachman1990equation}. Nonlinear third-order equations for relaxing media also appeared earlier in the literature in~\cite{varlamov1997third, varlamov2000long}. The nonlinear JMGT class of models to which the present equation belongs originates from the work of Jordan~\cite{jordan2014second}.

To reduce non-physical reflections from computational boundaries, we consider equation \eqref{JMGT eq} with absorbing boundary conditions of the form
\begin{equation} \label{tau abcs}
	\alpha (u+\tau \ut)_t
	+\frac{\partial}{\partial n}(\csq u+\btau \ut)
	= \gabs
	\quad \text{on } \Gabs,
\end{equation}
where $\alpha>0$. With $\gabs=0$, condition \eqref{tau abcs} can be seen as the Engquist--Majda absorbing boundary condition~\cite{abcsengquist} tailored to \eqref{JMGT eq}; we include here the boundary source $\gabs$ as it will be convenient to have it when determining empirical convergence rates in Section~\ref{sec: Newmark}. On the remainder of the boundary, we prescribe Neumann boundary excitation,
\begin{equation}
	\frac{\partial}{\partial n}(\csq u+\btau \ut)
	= \gN
	\quad \text{on } \GN=\Gamma\setminus\Gabs,
\end{equation}
and we also allow for $\GN= \emptyset$. 
Additionally, the equation \eqref{JMGT eq} is supplemented with three initial conditions,
\[
(u,\ut,\utt)|_{t=0}=(\uzero,\uone,\utwo),
\]
whereas the limiting Westervelt equation \eqref{West eq} requires only two.

\subsection*{Objectives and related literature}
The central objective of this work is to analyze numerical approximations of \eqref{JMGT eq} in the singular perturbation regime $\tau\to0$. 
For the spatial discretization, we employ discontinuous Galerkin (dG) methods on simplicial meshes. Such methods have proven effective for nonlinear wave propagation problems and exhibit excellent stability and accuracy properties for the limiting Westervelt equation \eqref{West eq}; see, e.g.,~\cite{antonietti2020high, dewit2026, gomez2025asymptotic}. For time discretization, we introduce a Newmark-type scheme for
\eqref{eq: z} whose vanishing-relaxation limit formally coincides
with the classical Newmark method for the Westervelt equation; see
\cite[Ch.~5]{kaltenbacher2007numerical}. To this end, we employ the rewriting using the combined quantity
\begin{eq}
	z= u+\tau \ut,
\end{eq}
as a second-order wave equation
\begin{eq} \label{eq: z}
	\ztt- \csq \Delta z - b \Delta \ut = \frac{k(\bfx)}{2}(u^2)_{tt} + f;
\end{eq}
see~\cite[Sec.~3.2]{kaltenbacher2026jordan} for the use of this rewriting in the continuous analysis. The formulations \eqref{JMGT eq} and \eqref{eq: z} are equivalent
at the semi-discrete level; however, the form \eqref{eq: z}
provides a convenient starting point for the construction of the
time discretization. \\
\indent Existing numerical analysis results for the JMGT equation are limited. Error estimates for implicit Runge--Kutta discretizations under homogeneous Dirichlet boundary conditions were derived in~\cite[Theorem 4.2]{kaltenbacher2021convergence}, while conforming finite element approximations of the linear Moore--Gibson--Thompson equation were analyzed in~\cite{knapp2014stability}.
Although discontinuous Galerkin methods are very well established, see, for example, the books~\cite{di2011mathematical, riviere2008discontinuous, cohen2017finite, CangianDongGeorgoulisHouston2017}, the review paper~\cite{antonietti2022mathematical}, and the references therein, their application to the JMGT equation has not yet been investigated from the perspective of the vanishing relaxation time analysis. \\
\indent To the best of our knowledge, this is the first work to establish $\tau$-uniform energy estimates for a discontinuous Galerkin discretization of the JMGT equation and to rigorously analyze the convergence of the resulting semi-discrete approximations toward the Westervelt limit.

For the continuous problem, the vanishing relaxation time limit has been investigated in~\cite{kaltenbacher2019jordan, bongarti2021vanishing, kaltenbacher2020vanishing, nikolic2024nonlinear}. In particular,~\cite[Theorem 5.1]{nikolic2024nonlinear} establishes, as a particular case, strong convergence of solutions $u$ of \eqref{JMGT eq} to $\utauzero$ at a linear rate in $\tau$ in a suitable norm under Dirichlet boundary conditions. More generally, the literature on the well-posedness and qualitative behavior of the JMGT equation and its linearized MGT counterpart is extensive; see, for example,~\cite{kaltenbacher2020vanishing, kaltenbacher2012well, kaltenbacher2011wellposedness}, the review~\cite{kaltenbacher2026jordan}, and the references therein. \\
\indent A central difficulty in the vanishing-relaxation analysis is obtaining
estimates that uniformly prevent the factor $1+ku$ from
degenerating. Such control is required to ensure that
\eqref{West eq} remains a valid nonlinear wave model and is
consistent with the weakly nonlinear acoustic regime underlying
its derivation~\cite{hamilton1998nonlinear}. Establishing analogous uniform control is one of the key challenges in the semi-discrete setting and plays a central role in the convergence analysis carried out below.
\subsection*{Main findings} In this work, we establish the following results, all with the practically relevant mixed Neumann--absorbing boundary conditions \eqref{tau abcs}:
\begin{itemize}[leftmargin=*, itemsep=2pt] 
	\item[$\circ$] we prove optimal \emph{a priori} energy estimates for the dG semi-discretization of \eqref{JMGT eq} that are uniform with respect to the relaxation parameter $\tau$;
	
	\item[$\circ$] we prove that the semi-discrete dG approximations converge to the corresponding Westervelt solution with order $\calO(\tau)$ in a suitable norm;
	
	\item[$\circ$] we develop a Newmark-type fully discrete scheme for \eqref{eq: z}, tailored to the small-relaxation regime, and investigate its performance numerically.
\end{itemize}
The present analysis also provides new insight into the asymptotic behavior of the continuous problem in the singular limit. For mixed Neumann--absorbing boundary conditions, only weak convergence results appear to be available so far in~\cite[Theorem 6.6]{kaltenbacher2020vanishing}. In contrast to the present work, the JMGT model considered there is formulated in the so-called potential form, with nonlinearity $k(u_t^2)_t$, and is equipped with $\tau$-independent absorbing boundary conditions (i.e., \eqref{tau abcs} with $\tau=0$) and homogeneous initial data.

All of our results also apply to the linear MGT equation ($k=0$), although we do not pursue optimal assumptions on the polynomial degree or solution regularity in that setting. 

The proof of the $\tau$-uniform energy estimates follows the general framework of \cite{hochbruck22, maier2020error}; see also \cite{dorich2025strong, careaga2026finite, dewit2026}. We first establish well-posedness and error estimates on a possibly discretization-dependent time interval and subsequently extend these results to $[0,T]$ by deriving energy bounds that are uniform in both $\tau$ and the discretization parameters.

The main technical challenge lies in constructing semi-discrete energy analysis. While our approach is inspired by the continuous analyses in \cite{kaltenbacher2020vanishing, nikolic2024nonlinear}, those arguments cannot be transferred directly to the dG setting. In particular, the continuous theory relies on global smoothness properties of the solution, such as $u(t)\in H^{3/2+s}(\Omega)$ with $s \in [0, 1/2)$, which are unavailable for discontinuous finite element approximations. 
\subsection*{Organization} The remainder of the paper is organized as follows.  Section~\ref{Sec: main results} contains the theoretical preliminaries as well as the statements of the main theoretical results: $\tau$-uniform \emph{a priori} bounds in Theorem~\ref{thm: main} and the $\tau$ limiting behavior of semi-discrete solutions in Theorem~\ref{thm: tau limit}.  We further devise a Newmark time discretizations strategy and numerically investigate the problem.  Section~\ref{Sec: proof} is dedicated to the proof of optimal \emph{a priori} bounds in Theorem~\ref{thm: main}. The final Section~\ref{Sec: proof 2} contains the proof of the vanishing relaxation convergence.  \\[2mm]
\noindent \textbf{Notation}. Throughout the paper, we use $\lhs \lesssim \rhs$ to denote $\lhs \leq C \cdot \rhs$ for a constant $C>0$ independent of both the mesh size $h$ and the thermal relaxation parameter $\tau$.  Given a Hilbert space $H$,  its inner product is denoted by  $(\cdot, \cdot)_{H}$.\\
\section{Main results:  \emph{a priori} estimates and \texorpdfstring{$\tau$}{tau}  convergence}
\label{Sec: main results}

In this section, we state and numerically illustrate the two main theoretical results of this work,
uniform \emph{a priori} estimates and convergence in the relaxation limit
$\tau \to 0$ for the initial boundary-value problem associated with \eqref{JMGT eq}:
\begin{equation}\label{exact jmgt problem} \tag{P$^\tau$}
	\begin{cases}
		\tau \uttt + ((1+k(\bfx)u)\ut)_t -\csq \Delta u - \btau \Delta \ut = f \qquad & \textup{ in }  \Omega \times (0,T) ,\\[1mm]
		\alpha	(u+\tau \ut)_t+ 	\frac{\partial }{\partial n}  (\csq u+\btau \ut) = \gabs& \textup{ on } \Ga \times (0,T),\\[2mm]
		\frac{\partial }{\partial n}  (\csq u+\btau \ut) = \gN& \textup{ on } \GN \times (0,T),\\[2mm]
		(u, \ut, \utt) = (\uzero, \uone, \utwo) & \textup{ at } \Omega  \times \{0\}.
	\end{cases}
\end{equation}
We assume that the given initial data and source functions are such that the exact problem \eqref{exact jmgt problem} is well posed and admits a sufficiently regular solution satisfying
\begin{eq}\label{def Xu}
	u \in \Xu
	:= W^{2,\infty}(0,T;H^{q+1}(\Omega))
	\cap H^{3}(0,T;H^q(\Omega)),
	\qquad q \geq d/2,
\end{eq}
and that there exists $r>0$ such that
\begin{eq} \label{non-degeneracy}
	\|ku\|_{C([0,T];L^\infty(\Omega))} < r < 1.
\end{eq}
The smallness condition \eqref{non-degeneracy} implies that $1+ku > 1-r >0
\quad \text{a.e. in } \Omega\times(0,T)$, and therefore guarantees that the leading term in the limiting Westervelt equation \eqref{West eq} remains non-degenerate. In the well-posedness theory of nonlinear acoustic models, the non-degeneracy condition \eqref{non-degeneracy} is typically ensured by imposing suitable smallness assumptions on the source, initial, and boundary data; see, for example,~\cite[Theorem 6.5]{kaltenbacher2020vanishing}.  Here, we instead assume the nonlinearity coefficient $k$ to be sufficiently small in $L^\infty(\Omega)$. This allows us to avoid a smallness assumption on the discretization parameter and to include the polynomial degree case $q = d/2$ in the analysis. 

\subsection{Discrete spaces and notation}
We discretize \eqref{exact jmgt problem} in space on a quasi-uniform, shape-regular 
simplicial mesh $\triangles$ that exactly partitions $\Omega$. Let $h_K = \textup{diam}(K)$, 
$h = \max_K h_K$, and let $\calFh = \calFhint \cup \calFhbnd$ denote all faces, split 
into interior and boundary faces, with $\calFhbndN$, $\calFhbndabs$ the subsets lying 
on $\GN$ and $\Gabs$. The local face parameter $\hF$ is the diameter of $F$ for $d \geq 2$. To make sense of the dG formulation when $d=1$, we set $\hF = \min(h_{K_1}, h_{K_2})$ for interior faces in that case and $\hF = h_K$ for boundary faces.

For an interior face $F = \partial K_1 \cap \partial K_2$ with fixed normal 
$\bfn_F = \bfn_{K_1} = -\bfn_{K_2}$, the jump and average operators are
\begin{equation}
	\jump{\phi} = \phi_{\vert K_1} - \phi_{\vert K_2}, \qquad 
	\avg{\phi} = \tfrac{1}{2}(\phi_{\vert K_1}+\phi_{\vert K_2}),
\end{equation}
with $\jump{\phi} = \avg{\phi} = \phi$ on boundary faces.  We denote by $\mathbb{P}^{q}(K)$ the space of polynomials of degree $q \geq 1$ on $K \in \triangles$. The space of approximate solutions is defined by
\begin{eq}
	\Vh = \bigl\{\phih \in \Ltwo \mid \phih_{\vert K} \in \mathbb{P}^{q}(K),\ 
	\forall K \in \triangles \bigr\}, \quad q \geq 1,
\end{eq}
equipped with the semi-norm
\begin{eq}
	\dgseminorm{\phih}^2 = \sum_{K \in \triangles} \|\nabla \phih\|_{L^{2}(K)}^{2} 
	+ \sum_{F \in \calFhint} \hF^{-1}\|\jump{\phih}\|_{L^{2}(F)}^{2}.
\end{eq}
The broken Sobolev space $H^{m}(\triangles) = \{\phi \in L^{2}(\Omega) \mid 
\phi_{\vert K} \in H^{m}(K),\ \forall K \in \triangles\}$, $m \geq 1$, carries the norm
\begin{eq}
	\|\phi\|_{H^{m}(\triangles)} = \Bigl(\sum_{K \in \triangles} 
	\|\phi\|^2_{H^{m}(K)}\Bigr)^{1/2},
\end{eq}
and the broken gradient operator $\nablah: H^1(\triangles) \rightarrow [\Ltwo]^d$ is defined element-wise by $(\nablah v)_{\vert K} = \nabla(v_{\vert K})$;  see~\cite[Def.~1.2.1]{di2011mathematical}.

\subsection{Semi-discretization}  To discretize the problem, we employ the symmetric interior penalty (SIP) Galerkin bilinear form $\asip: \left(H^2(\Omega) + \Vhq\right) \times \Vhq \rightarrow \R$ given by 
\begin{eq}  \label{def: ah}
 \begin{multlined}[t]
 		\asip(\psi, \phih) 
 	=  \intO  \nablah \psi \cdot \nablah \phih \dx 		+ \sum_{F \in \calFhint} \int_{F} \frac{\chi}{\hF} \jump{\psi} \jump{\phih} \dG
	\\	\hspace*{2cm}- \sum_{F \in \calFhint} \intF \left( \jump{\psi}\avg{\nablah \phih }\cdot \nF + \jump{\phih}\avg{ \nablah \psi }\cdot \nF \right)  \dG, \vspace*{-2mm}
	\end{multlined}
\end{eq}
where $\chi>0$ is the stabilization parameter. The summations in the consistency, symmetry, and penalty terms in $\asip$ are limited to interior faces; cf.~\cite[eq.~(4.16)]{di2011mathematical}. The absorbing boundary condition will be enforced by incorporating the term $\alpha( \uht+ \tau \uhtt, \phih)_{\LtwoGabs}$  into the formulation~\eqref{semi-discrete jmgt},  while the Neumann data will enter through the linear functional  on the right-hand side.

The semi-discretization of \eqref{exact jmgt problem} then reads as follows: We search for $\uh  \in C^3([0,T]; V_h)$, which, for all $\phih \in \Vhq$ and $t \in (0,T]$ satisfies
\begin{equation} \label{semi-discrete jmgt} \tag{$P^\tau_h$}
	\left\{	\begin{aligned}
		&\begin{multlined}[t](\tau \uhttt, \phih)_{\Ltwo}+(((1+k \uh)\uht)_t, \phih)_{\Ltwo}+\ah(\csq\uh+\btau \uht, \phih) \\
			+ \alpha( \uht+ \tau \uhtt, \phih)_{\LtwoGabs}
		\end{multlined}
		\\
		=&\, (\fh, \phih)_{\Ltwo} + (\ghabs, \phih)_{\LtwoGabs}  + (\ghN, \phih)_{\LtwoGN}, \\[1mm]
		& (\uh, \uht, \uhtt)\vert_{t=0}=(\uzeroh,  \uoneh, \utwoh) \in \left(\Vh\right)^3.
	\end{aligned} \right.
\end{equation}
The functions  $\fh \in  C([0,T]; \Vh)$, $\ghabs \in H^1(0,T; \VGhabs)$, $\ghN \in H^1(0,T; \VGhN)$  are assumed to approximate $f$, $\gabs$, and $\gN$, respectively, in the following sense:
\begin{eq} \label{approx properties source terms}\
	& \|f - \fh\|_{L^{2}(0,T;L^{2}(\Omega))} \lesssim h^{q},  \quad &&
	\|\gabs- \ghabs\|_{H^1(0,T;L^{2}(\Gabs))} \lesssim h^{q},  \\
	& \|\gN- \ghN\|_{H^1(0,T;L^{2}(\GN))} \lesssim h^{q+1}, &&
\end{eq}
where $\VGhabs$ and $\VGhN$ denotes the trace space of $\Vh$ on $\Gabs$ and $\GN$.  

The higher order of approximation for $\ghN$ is required in the convergence analysis to estimate Neumann boundary contributions in the error estimates as these involve an application of an inverse trace inequality; see \eqref{est ghN} in the proof of Lemma~\ref{lemma: defect}. The assumed approximation properties are satisfied, for instance, by choosing
$f_h$, $\gabs$, and $\gN$ as suitable interpolants or $L^2$-projections of the exact data onto the corresponding discrete spaces, provided the latter are sufficiently regular. 
\subsection{Uniform \emph{a priori} bounds} We now state the first main theoretical result which concerns \emph{a priori} error bounds measured in the following energy norm:
\begin{eq} \label{def calE}
	\tnorm{u(t)}^2 
	=\,	\begin{multlined}[t]
		\frac{\tau}{2}\| \utt(t)\|^2_{\Ltwo}+ \intt  \|\utt(s)\|^2_{\Ltwo}\ds 	+ \|\ut(t)\|^2_{\Ltwo} 	
		+| \ut(t) |^2_{\dGnorm}\\+ \| \ut(t)\|^2_{\LtwoGabs}+  \tau \intt \| \utt (s)\|^2_{\LtwoGabs}\ds
+ \|u(t)\|^2_{\Ltwo} \\+| u(t) |^2_{\dGnorm} .
	\end{multlined}
\end{eq}
This norm is the natural one for the semi-discretization of \eqref{JMGT eq} after testing with $\uhtt$,  which will be the test function of choice in the energy analysis.

The approximate initial data are assumed to interpolate the exact ones, where the interpolant operator is given by
\begin{equation} \label{def interpolant}
	\left(	\interpolant \phi \right)_{ \vert K} = \locainterpolant \phi, \quad K \in \triangles,
\end{equation}
with $\locainterpolant \phi$ being the local continuous interpolant; we refer to Lemma~\ref{lem:  local interpolant properties} for its properties. As we are interested in the small-parameter regime, we assume without loss of generality that $\tau \in (0, \bar{\tau}]$, for some fixed $\bar{\tau}>0$. \vspace*{-2mm}
\begin{theorem}[$\tau$-uniform \emph{a priori} error bounds] \label{thm: main}
Let $\tau \in (0, \bar{\tau}]$ for some fixed $\bar{\tau}>0$, and let $\csq$, $ b> 0$. Let the assumptions on $\triangles$ made this section hold and let the polynomial degree be $\qp \geq d/2$, where $d \in \{1,2,3\}$. 
Suppose $u \in \Xu$ is the solution of the exact JMGT problem that satisfies the non-degeneracy condition \eqref{non-degeneracy}, where the space $\Xu$ is defined in \eqref{def Xu}.
	
	Let approximate data $\fh$, $\ghabs$, and $\ghN$ satisfy the accuracy assumptions made in \eqref{approx properties source terms} and let approximate acoustic initial conditions be chosen to interpolate the exact ones:
	\begin{eq}
		(\uzeroh,  \uoneh, \utwoh) = (\Ih \uzero, \Ih \uone, \Ih \utwo). 
	\end{eq}
	Then there exists $\bark>0$, such that for all $\|k\|_{\Linf} \leq \bark$,  problem \eqref{semi-discrete jmgt} has a unique  solution $\uh \in C^3([0,T]; \Vhqp)$ satisfying
	\begin{eq} \label{error bound pressure tnorm}
		\tnorm{{u(t)-\uh(t)}}^2	\lesssim h^{2q}
	\end{eq}
	for $t \in [0,T]$.  The hidden constants depend on $\|u\|_{\Xu}$ and $T$, but not on the discretization parameter $h$ nor on the thermal relaxation time $\tau$. 
\end{theorem}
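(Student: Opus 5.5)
The plan is the bootstrap framework of \cite{hochbruck22, maier2020error}. The error $\ehu=\uh-\Ih u$ is estimated on a maximal interval where the discrete solution stays close to the interpolant, and that interval is then shown to be all of $[0,T]$.

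\emph{Step 1 (local existence).} Write $u=\Ih u-\eIu$ with $\eIu=\Ih u-u$, so that $\uh-u=\ehu+\eIu$. The interpolation error satisfies $\tnorm{\eIu(t)}\lesssim h^q$ by Lemma~\ref{lem: local interpolant properties} and $u\in\Xu$. Since $\tau>0$, the system \eqref{semi-discrete jmgt} is a finite-dimensional ODE in $\uh$ for fixed $h$. Its leading matrix is the mass matrix $\tau M+$ the boundary mass term, which is invertible. Picard--Lindelöf therefore gives a unique local solution in $C^3$. Let $\thh$ be the largest time up to which
\[
\|\ehu\|_{L^\infty(0,t;\Linf)}+\|\ehut\|_{L^\infty(0,t;\Linf)}\le h^{\varepsilon_0},
\]
for a small $\varepsilon_0$, or simply $\le 1$ in the case $q=d/2$, where the bound is instead handled by smallness of $k$. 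Below we also want $\|k\uh\|_{\Linf}\le r'<1$. Since $\ehu(0)=0$, we have $\thh>0$.

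\emph{Step 2 (error equation and $\tau$-uniform energy estimate).} Subtract the consistent exact weak formulation, using $\asip$-consistency for $u\in H^2$, from \eqref{semi-discrete jmgt}. This gives an equation for $\ehu$ with the nonlinear difference $((1+k\uh)\uht-(1+ku)\ut)_t$ and a defect containing $\eIu$ and the data errors. Test with $\ehutt$. The resulting terms are treated as follows.
\begin{itemize}
\item The term $\tau(\ehuttt,\ehutt)$ gives $\frac{\tau}{2}\frac{d}{dt}\|\ehutt\|^2$.
\item The term $((1+k\uh)\ehutt,\ehutt)$ gives $(1-r')\|\ehutt\|^2$.
\item The term $\btau\asip(\ehut,\ehutt)$ gives $\frac{b}{2}\frac{d}{dt}\dgseminorm{\ehut}^2$, using coercivity of $\asip$ for $\chi$ large.
\item The term $\csq\asip(\ehu,\ehutt)$ is integrated by parts in time: $\frac{d}{dt}\asip(\ehu,\ehut)-\asip(\ehut,\ehut)$. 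The boundary term is absorbed into $\frac{b}{2}\dgseminorm{\ehut}^2$ via Young's inequality. The $\dgseminorm{\ehu}$ part is recovered from $\ehu(t)=\intt\ehut$.
\item The absorbing term gives $\frac{\alpha}{2}\frac{d}{dt}\|\ehut\|^2_{\LtwoGabs}+\alpha\tau\|\ehutt\|^2_{\LtwoGabs}$.
\end{itemize}
The nonlinear remainders are $k\ehut u_t$, $k\eIu_t\ut$, $k\ehu\utt$, and so on. They are bounded with $\|k\|_{\Linf}\le\bark$ and $L^\infty$ bounds on $u$, $\ut$, $\utt$, which hold because $q\ge d/2$ and $u\in\Xu$ gives $H^{q+1}\hookrightarrow L^\infty$. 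The quadratic term $k\ehut\ehut$ is controlled by the bootstrap bound. The defect terms $(\eIu_{ttt}\tau,\ehutt)$, $((\eIu_t)_t,\ehutt)$ and $\asip(\eIu,\ehutt)$ are handled with Lemma~\ref{lemma: defect}. This is the main obstacle: $\asip(\eIu,\ehutt)$ cannot be bounded by $\|\ehutt\|_{L^2}$ without losing a power of $h$. I would integrate by parts in time, moving the derivative onto $\eIu$; this is why the hypothesis $W^{2,\infty}(H^{q+1})$ is needed. For Neumann data, inverse trace inequalities cost $h^{-1/2}$, which is compensated by the $h^{q+1}$ data accuracy. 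Crucially, $\tau$ appears only with favourable signs or multiplies $\eIu_{ttt}\in L^2(H^q)$, so all constants are $\tau$-independent. Gr\"onwall's inequality then yields $\tnorm{\ehu(t)}^2\lesssim h^{2q}$ for $t\le\thh$, since $\ehu$, $\ehut$, $\ehutt$ vanish at $t=0$.

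\emph{Step 3 (closing the bootstrap).} The inverse inequality and a discrete Sobolev bound give $\|\ehut\|_{\Linf}\lesssim h^{-d/2}\|\ehut\|_{L^2}\lesssim h^{q-d/2}$, and likewise for $\ehu$. For $q>d/2$ this is $o(1)$ as $h\to0$. For $q=d/2$ it is only $O(1)$, with a constant proportional to the data. Here the smallness of $\bark$ is used: the bootstrap threshold need only ensure $\bark\|\uh\|_{\Linf}\le r'$, and choosing $\bark$ small makes this strictly improvable. Hence $\thh=T$ by continuity. Uniqueness follows from the ODE theory, and \eqref{error bound pressure tnorm} follows by the triangle inequality with the interpolation estimate.
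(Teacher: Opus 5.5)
Your energy argument matches the paper's. You test the error equation with $\ehutt$, move the time derivative in $\asip(\csq\eIu+\btau\eIu_t,\ehutt)$ onto $\eIu$, pay the $h^{-1/2}$ of the inverse trace inequality with the extra order of the Neumann data, and close with Gr\"onwall using only $\tau$-favourable terms. Your local existence argument is also fine. For $\tau>0$ the coefficient of $\uhttt$ is just $\tau$ times the mass matrix (the absorbing term $\alpha\tau(\uhtt,\phih)_{\LtwoGabs}$ multiplies $\uhtt$, not $\uhttt$), so this step does not even need non-degeneracy of $1+k\uh$.

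The gap is in how the bootstrap is set up and closed. Take your threshold $\|\ehu\|_{\Linf}+\|\ehut\|_{\Linf}\le 1$ for $q=d/2$. Step 2 then only returns $\|\ehut(t)\|_{\Linf}\le \invconstant h^{-d/2}\|\ehut(t)\|_{\Ltwo}\le \invconstant C_{\mathrm{E}}$. Here $C_{\mathrm{E}}$ comes from interpolation and data errors and does not shrink as $\bark\to 0$; it is nonzero already for $k=0$. Nothing forces $\invconstant C_{\mathrm{E}}<1$, so the threshold is not strictly improved.

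Your fallback, a condition $\bark\|\uh\|_{\Linf}\le r'$ alone, is the right kind of condition but is not enough to run Step 2. It gives no bound on $\|k\uht\|_{\Linf}$, which you need for the term $k(\uht+\ut)\ehut\,\ehutt$ (this contains your quadratic term $k\ehut\ehut$) and for the defect term $k\uht\eIu_t$. For $q>d/2$, the $h^{\varepsilon_0}$ threshold closes only if $Ch^{q-d/2}<h^{\varepsilon_0}$. That is a mesh-size restriction $h\le h_0$, which the theorem does not assume and which the paper deliberately avoids.

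The missing ingredient is the ordering of constants used in the paper:
\begin{enumerate}
\item Bootstrap on $\tnorm{\uh(t)-\Ih u(t)}\le \Czero h^{d/2}$. In your formulation, this is an $L^\infty$ bound on $\ehu,\ehut$ by a large constant $M$ rather than by $1$.
\item By the inverse inequality this gives $\|\ehu\|_{\Linf}+\|\ehut\|_{\Linf}\lesssim \Czero$. Hence $\|k\uht\|_{\Linf}\lesssim 1$ and $\|k\uh\|_{\Linf}<r+r_0$ as soon as $\bark\Czero$ is small.
\item Under these two bounds, the Step 2 constant $C_{\mathrm{E}}$ does not depend on $\Czero$.
\item Choose $\Czero>C_{\mathrm{E}}$ first, and only afterwards choose $\bark$ small relative to $\Czero$.
\item Then $\tnorm{\ehu(t)}\le C_{\mathrm{E}}h^{q}<\Czero h^{d/2}$ strictly improves the threshold.
\end{enumerate}
Since $h^{q-d/2}\lesssim 1$, this closes the argument for every $q\ge d/2$ at once, with no case split and no smallness of $h$.
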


For the discussion of the time discretization scheme, we are also interested in the accuracy of the combined variable $z=u+\tau u_t$. We can deduce this from Theorem~\ref{thm: main}.  To this end, we introduce the semi-discrete dG version of the usual wave energy:
\begin{eq} \label{energy seminorm}
	\energyseminorm{\uh(t)} = \left(\|\uht(t)\|^2_{\Ltwo} + \dgseminorm{\uh(t)}^2\right)^{1/2}, \quad \uh(t) \in \Htwo+\Vh.
\end{eq}
Then with $\zh=\uh+\tau \uht$, the energy semi-norms satisfy
\begin{equation}
	\energyseminorm{u(t)-\uh(t)} + \energyseminorm{z(t)-\zh(t)}
	\lesssim
	\energyseminorm{u(t)-\uh(t)}
	+ \tau\,\energyseminorm{u_t(t)-\uht(t)}
\end{equation}
for $t\in[0,T]$. Using \eqref{energy seminorm} and the boundedness of $\tau \in (0,\bar\tau]$, this yields
\begin{eq} \label{conv rate z}
&	\energyseminorm{u(t)-\uh(t)} + \energyseminorm{z(t)-\zh(t)}\\
	\lesssim  &\,
	\|\ut-\uht\|_{\Ltwo}
	+ \dgseminorm{u-\uh}
	+ \sqrt{\tau}\|\utt-\uhtt\|_{\Ltwo} 
		\lesssim \, h^{2q}.
\end{eq}
We postpone the proof of Theorem~\ref{thm: main} to Section~\ref{Sec: proof} and discuss next the asymptotic behavior of semi-discrete solutions in the vanishing relaxation limit.
\subsection{Vanishing relaxation limit} The second main theoretical result concerns the connection between $\uh$ and $\uhtauzero$, where the latter solves the limiting semi-discrete Westervelt problem: 
\begin{equation} \label{semi-discrete west} \tag{$P^{\tau = 0}_h$}
	\left\{	\begin{aligned}
		&(((1+k \uhtauzero)\uhttauzero)_t, \phih)_{\Ltwo}+\csq\ah(\uhtauzero, \phih)
\\& \hspace*{5cm}		+ \alpha( \uhttauzero, \phih)_{\LtwoGabs}\\
		=&\, (\fhtauzero, \phih)_{\Ltwo} + (\ghabstauzero, \phih)_{\LtwoGabs}+ (\ghN, \phih)_{\LtwoGN} \\ &\text{  for all    } \phih \in \Vh, \ t \in (0,T],  \\
		& (\uhtauzero, \uhttauzero)\vert_{t=0}=(\uzeroh,  \uoneh).
	\end{aligned} \right.
\end{equation}  
In practical ultrasound applications, the excitation data are typically independent of the thermal relaxation parameter, since $\tau$ characterizes the propagation medium. Nevertheless, to cover the manufactured setting used in the numerical experiment of Section~\ref{Sec: numerics known solution}, we allow the discrete source terms $\fh$ and $\ghabs$ to depend here on $\tau$.  The following statement establishes strong linear convergence of the solutions of \eqref{semi-discrete jmgt}  as $\tau \rightarrow 0$ to the solution of \eqref{semi-discrete west}. \vspace*{-3mm}
\begin{theorem}[$\tau$ convergence] \label{thm: tau limit}
	Let the assumption of Theorem~\ref{thm: main} hold and let
	\begin{eq} \label{tau conv assumption fh ghabs}
		&\|\fh-\fhtauzero\|_{L^2(0,T; \Ltwo)} \lesssim \tau, \quad 	\|\ghabs-\ghabstauzero\|_{L^2(0,T; \LtwoGabs)} \lesssim \tau, 
	\end{eq}
	and assume that $\ghN$ is $\tau$-independent, so that $\ghN=\ghNtauzero $. Then the family $\{\uh\}_{\tau >0}$ of solutions to \eqref{semi-discrete jmgt} converges to the solution $\uhtauzero$ of the semi-discrete Westervelt problem \eqref{semi-discrete west} as $\tau \rightarrow 0$ at a linear rate:
	\begin{eq} \label{tau bound pressure}
		\begin{multlined}[t]
			\max_{t \in [0,T]}| \uh(t)-\uhtauzero(t) |^2_{\Ltwo}+ \int_0^T \dgseminorm{\uh(s)-\uhtauzero(s)}^2\ds 
			\lesssim \tau^2.
		\end{multlined}
	\end{eq}
\end{theorem}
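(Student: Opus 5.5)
We take the difference $\udiff := \uh - \uhtauzero$ and derive an energy estimate for it. The key inputs are the $\tau$-uniform bounds on $\uh$ from Theorem~\ref{thm: main}, together with analogous ($\tau=0$) bounds on $\uhtauzero$.

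\textbf{Step 1: uniform discrete bounds.} From Theorem~\ref{thm: main}, combined with the approximation properties of $\Ih$ and the regularity $u\in\Xu$, we get $\tau$- and $h$-uniform bounds on $\tnorm{\uh(t)}$. Inverse estimates, which are available since $q\ge d/2$, then give uniform $L^\infty$ control of $\uh$ and $\uht$. Concretely, we will bound $\|\uh-\Ih u\|_{\Linf}$ by $h^{-d/2}\|\uh-\Ih u\|_{\Ltwo}\lesssim h^{q+1-d/2}$, after first refining to an $L^2$ estimate for $\uh$ itself. Alternatively, we will use a discrete Sobolev embedding for the dG seminorm. These bounds yield $\|k\uh\|_{\Linf}\le \bark C<1$ for $\bark$ small. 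We also need $\int_0^T\|\uhtt\|^2_{\Ltwo}\ds$, which is contained in $\tnorm{\cdot}$, and $\sqrt\tau\|\uhtt\|_{\Ltwo}$ bounded. For $\uhtauzero$ we repeat the standard semi-discrete Westervelt energy argument, testing with $\uhttauzero$ and $\uhtt^{\tau=0}$, to get the same type of bounds and non-degeneracy; smallness of $\bark$ again closes this.

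\textbf{Step 2: error equation.} Subtracting \eqref{semi-discrete west} from \eqref{semi-discrete jmgt}, we obtain for all $\phih$
\begin{align*}
&(((1+k\uh)\utdiff)_t,\phih)+(k\udiff\,\uhttauzero)_t\text{-terms}+\csq\ah(\udiff,\phih)+\alpha(\utdiff,\phih)_{\LtwoGabs}\\
&\quad= -\tau(\uhttt,\phih)-b\,\ah(\uht,\phih)\big|_{\text{diff}}-\tau\csq\ah(\uht,\phih)-\tau\alpha(\uhtt,\phih)_{\LtwoGabs}+(\fh-\fhtauzero,\phih)+(\ghabs-\ghabstauzero,\phih)_{\LtwoGabs}.
\end{align*}
Here the $b$-damping term is $b\ah(\utdiff,\phih)$, which stays on the left. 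All $\tau$-terms are moved to the right-hand side as perturbations.

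\textbf{Step 3: energy test.} The estimate in \eqref{tau bound pressure} is in $L^\infty L^2$ and $L^2$-seminorm, i.e.\ one time derivative below natural energy. We therefore test with $\phih=\utdiff$ and integrate in time. The damping term $b\ah(\utdiff,\utdiff)$ is unhelpful for the claimed norm, whereas $\csq\ah(\udiff,\utdiff)$ gives $\tfrac{\csq}{2}\dgseminorm{\udiff}^2$; likewise $((1+k\uh)\utdiff)_t$ tested with $\utdiff$ produces $\|\sqrt{1+k\uh}\,\utdiff\|^2$ up to $k\uht|\utdiff|^2$ terms. The problematic terms are $\tau(\uhttt,\utdiff)$ and $b\ah$-type terms. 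We expect the cleaner route is to \emph{time-integrate the equation once}, since every term except the nonlinearity is then in conservative form. Testing the integrated equation with $\utdiff$ yields control of $\|\utdiff\|^2_{L^2L^2}$, $\tfrac b2\dgseminorm{\udiff(t)}^2$ and $\alpha\|\udiff(t)\|^2_{\LtwoGabs}$. Alternatively, testing with $\udiff$ gives $\tfrac12\frac{d}{dt}\|\sqrt{1+k\uh}\udiff\|^2$-type terms and $b\,\tfrac{d}{dt}\dgseminorm{\udiff}^2$, plus $\csq\int\dgseminorm{\udiff}^2$, which matches \eqref{tau bound pressure} exactly. We adopt the latter: integrate once in time and test with $\udiff$. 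Because $\uh(0)=\uhtauzero(0)$ and $\uht(0)=\uhttauzero(0)$, the initial error terms vanish. The right-hand side becomes $\tau(\uhtt(t)-\utwoh,\udiff)+\tau\csq\ah(\uh,\udiff)+\tau\alpha(\uht,\udiff)_{\Gabs}+\dots$. These are bounded by Young's inequality as $\tau^2(\|\uhtt\|^2+\dgseminorm{\uh}^2+\dots)+\epsilon(\|\udiff\|^2+\dgseminorm{\udiff}^2)$. The term $\tau(\uhtt,\udiff)$ is handled by integrating over $(0,t)$ and using $\int\|\uhtt\|^2\lesssim1$, which holds uniformly by Step 1. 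The source differences are bounded by \eqref{tau conv assumption fh ghabs}. The nonlinear terms, $k\uh\utdiff$ after integration together with $k\udiff\uhttauzero$, are controlled by the $L^\infty$ bounds from Step 1 and absorbed via Gr\"onwall.

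\textbf{Main obstacle.} The main obstacle is the nonlinear term once integrated in time. $\int_0^t((1+k\uh)\utdiff)\ds$ does not telescope; one must write $(1+k\uh)\utdiff=((1+k\uh)\udiff)_t-k\uht\udiff$. This requires uniform $L^\infty$ bounds on $\uht$ for both families, which Step 1 must supply uniformly in $\tau$ and $h$ via $q\ge d/2$ and smallness of $\bark$.
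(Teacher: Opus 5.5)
Your adopted route (integrating the difference equation once in time and testing with $\udiff$) is algebraically identical to the paper's proof, which tests the time-weak form with the nonlocal function $\phih(t)=\int_t^{t'}\udiff(s)\ds$ and integrates by parts; it also uses the same inputs: the $\tau$-uniform bounds on $\|\uhtt\|_{\LtwoTLtwo}$, $\int_0^T\dgseminorm{\uh}^2\ds$ and $\|\uht\|_{L^2(0,T;\LtwoGabs)}$ from Theorem~\ref{thm: main}, $L^\infty$ control of $k\uht$ and $k\uhttauzero$, and \Gronwall's inequality. One bookkeeping slip, which does not affect the conclusion: in the integrated equation the $\csq$ term becomes the time derivative $\tfrac{\csq}{2}\tfrac{d}{dt}\dgseminorm{\int_0^t\udiff\ds}^2$ (the paper's $\dgseminorm{\phih(0)}^2$), while the damping term gives $b\,\dgseminorm{\udiff}^2$, and it is this term that supplies $\int_0^T\dgseminorm{\udiff}^2\ds$, not the other way round as you wrote.
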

Previously, only weak convergence has been established for the continuous problem with mixed Neumann–Enquist--Majda boundary conditions in~\cite[Theorem 6.6]{kaltenbacher2020vanishing}, where a different nonlinearity of the form $ (u^2_t)_t$ is considered and additional restrictions are imposed, including constant $k$ and homogeneous initial data. Theorem~\ref{thm: tau limit} suggests that a corresponding strong convergence result with a linear rate may also hold at the continuous level in the present setting. For the continuous JMGT equation with Dirichlet boundary conditions, a linear convergence rate was established in~\cite[Theorem 5.1]{nikolic2024nonlinear}. 

We postpone the proof of Theorem~\ref{thm: tau limit} to Section~\ref{Sec: proof 2} to first numerically investigate the problem.

\subsection{Full discretization}  \label{sec: Newmark}
For the time discretization, as announced, we employ the $z$ rewriting of the problem:
\begin{equation} \label{z u form}
 \begin{aligned}
			 z_{tt} - c^2 \Delta z - b \Delta u_t = -\frac{k}{2} (u^2)_{tt} +f, \qquad
		z= \tau \ut+ u.
	\end{aligned} 
\end{equation}
The Newmark time discretization scheme is known to display good properties for longer simulation times in realistic parameter settings 	for the limiting Westervelt equation; see, for example,~\cite[Chapter 5]{kaltenbacher2007numerical}.	We thus employ a time-discretization scheme for \eqref{z u form} that formally corresponds to the Newmark method for the Westervelt equation in the limit $\tau \rightarrow 0$. 

We first discretize the wave equation for $z$ using the Newmark method with a semi-implicit approach for nonlinearities. Let $t_0=0 < t_1 < \ldots < t_{N-1} < t_N=T$ be a uniform discretization of $[0,T]$ with the time step $\Deltat = t_{n}-t_{n-1}$, $n \geq 1$.  Given parameters $\betanm, \gammanm \in [0,1]$, we seek
$(\zhnplusone, \zdothnplusone, \zdotdothnplusone) \in (\Vh)^3$ satisfying 
\begin{equation} \label{Newmark relations z}
	\begin{aligned}
		\zhnplusone &= \zhn + \Deltat\,\zdothn
		+ \frac{\Deltat^2}{2}\Bigl(
		(1-2\betanm)\,\zdotdothn + 2\betanm\,\zdotdothnplusone
		\Bigr), \\
		\zdothnplusone &= \zdothn + \Deltat\Bigl(
		(1-\gammanm)\,\zdotdothn + \gammanm\,\zdotdothnplusone
		\Bigr) 
	\end{aligned}
\end{equation}
for $n \geq 0$, with
\begin{eq}	\label{eq:weak-z}
	\bigl(\zdotdothnplusone,\, \phih\bigr)_{\Ltwo}
	+ \csq\,\ah\bigl(\zhnplusone,\, \phih\bigr)
	+ b\,\ah\bigl(\udothnplusone,\, \phih\bigr)
	+ \alpha\,\bigl(\zdothnplusone,\, \phih\bigr)_{\LtwoGabs} \\
	= \begin{multlined}[t] - \bigl(k
	\uhnplusone\,\udotdothnplusone + k(\udothnplusone)^2,\,
	\phih
	\bigr)_{\Ltwo}
	+ \bigl(\fhnplusone,\, \phih\bigr)_{\Ltwo}
\\	+ \bigl(\ghabsnplusone,\, \phih\bigr)_{\LtwoGabs}+ \bigl(\ghNnplusone,\, \phih\bigr)_{\LtwoGN}.
	\end{multlined}
\end{eq}
Here $\fhnplusone = \fh(\cdot, t_{n+1})$, $\ghabsnplusone= \ghabs(\cdot, t_{n+1})$, and $\ghNnplusone= \ghN(\cdot, t_{n+1})$.	

If $b=k=0$ in \eqref{eq: z}, the error and stability analysis of \eqref{Newmark relations z}, 	\eqref{eq:weak-z} can be adapted from~\cite[Chapter 8.6]{raviart1983introduction}, where $H^1$ conforming finite element discretization in space is employed, and the problem is equipped with homogeneous Dirichlet boundary data. The scheme is then unconditionally stable and second-order accurate if $\gammanm=\frac12$ and $\betanm=\frac14$.

To discretize the $z$--$u$ relation, we again employ a Newmark approach. Given parameters $\deltanm, \etanm \in [0,1]$, we seek $(\uhnplusone, \udothnplusone) \in (\Vh)^2$ satisfying for $n \geq 0$
\begin{eq}\label{Newmark relations uh}
	\uhnplusone =&\, \uhn+ \Deltat \udothn+ \frac{\Deltat^2}{2} ((1-2\deltanm)\udotdothn+ 2\deltanm \udotdothnplusone), \\
	\udothnplusone =&\, \udothn+ \Deltat ((1-\etanm)\udotdothn+ \etanm \udotdothnplusone).
\end{eq}
The Newmark scheme is initialized with $\uh_0 = \Ih \uzero$, $\dot{u}^h_0 = \Ih \uone$, 
$\ddot{u}^h_0 = \Ih \utwo$, and $\zh_0 = \Ih(\uzero + \tau \uone)$,  $\dot{z}^h_0 = \Ih(\uone + \tau \utwo)$. \vspace*{2mm}

\noindent \textbf{Rewriting \eqref{eq:weak-z} in terms of one unknown}.   We introduce the following quantities on the previous time level $n$:
\begin{eq}
	\zhprev =&\, \zhn + \Deltat\,\zdothn
	+ \frac{\Deltat^2}{2}
	(1-2\betanm)\,\zdotdothn \quad\quad && \zdothprev=\, \zdothn + \Deltat 
	(1-\gammanm)\,\zdotdothn, \\
	\uprev=&\,  \uhn+ \Deltat \udothn+ \frac{\Deltat^2}{2} (1-2\deltanm)\udotdothn, \quad && \udotprev=\,  \udothn+ \Deltat (1-\etanm)\udotdothn.
\end{eq}
Then the Newmark relations \eqref{Newmark relations z}  and \eqref{Newmark relations uh} for $z^h$ and $\uh$ can be written as
\begin{equation} 
	\begin{aligned}
		\zhnplusone = \zhprev+ \betanm \Deltat^2 \zdotdothnplusone, \qquad
		\zdothnplusone = \zdothprev+ \gammanm \Deltat \zdotdothnplusone,
	\end{aligned}
\end{equation}
and
\begin{equation} 
	\begin{aligned}
		\uhnplusone = \uprev+ \deltanm \Deltat^2 \udotdothnplusone, \qquad
		\udothnplusone = \udotprev+ \etanm \Deltat \udotdothnplusone.
	\end{aligned}
\end{equation}
The relation $\tau \ut+u =z$ is discretized as follows:
\begin{eq}
	\tau ( \udotprev + \Deltat \etanm \udotdothnplusone ) + \uprev + \Deltat^2 \deltanm \udotdothnplusone = \zhnplusone
\end{eq}
from which we infer
\begin{eq}  \label{def uhn dotdot via zhn} 
	\udotdothnplusone 
	=&\, D_n + \zeta_\tau \zdotdothnplusone,
\end{eq}
where we have introduced the short-hand notation
\begin{eq}
	D_n =&\, \frac{\zhprev-\uprev-\tau \udotprev}{\Deltat(\deltanm \Deltat + \tau \etanm)}, \quad \zeta_\tau = \frac{\betanm \Deltat}{\deltanm \Deltat + \tau \etanm}
\end{eq}
Further, 
\begin{eq}  \label{def uhn derivatives via zhn} 
	\udothnplusone = \udotprev+ \Deltat \etanm (D_n+ \zeta_\tau \zdotdothnplusone) = C_n + \lambda_\tau \zdotdothnplusone, 
\end{eq}
where
$	C_n = \udotprev + \Deltat \etanm D_n $ and $\lambda_\tau = \Deltat \etanm \zeta_\tau$.
We substitute  \eqref{def uhn dotdot via zhn} and \eqref{def uhn derivatives via zhn} into \eqref{eq:weak-z} together with \eqref{Newmark relations z} to obtain
\begin{eq}	\label{eq:assembled}
	&\begin{multlined}[t]	\bigl(\zdotdothnplusone,\,\phih\bigr)_{\Ltwo}+\csq\, \betanm \Deltat^2 \ah(\zdotdothnplusone, \phih)+  b \lambda_\tau\, \ah(\zdotdothnplusone, \phih) \\[1mm]
		+ \alpha \gammanm\,\Deltat\, \bigl( \zdotdothnplusone,\,\phih
		\bigr)_{\LtwoGabs} 
		+ \, \bigl(k
		(	\uhn\,\zeta_\tau 
		+ \udothn\,\lambda_\tau )\zdotdothnplusone,\,
		\phih
		\bigr)_{\Ltwo} 
	\end{multlined}
	\\
	=&\,\begin{multlined}[t]  \bigl(\fhnplusone,\,\phih\bigr)_{\Ltwo}
		+ \bigl(\ghabsnplusone,\,\phih\bigr)_{\LtwoGabs}+ \bigl(\ghNnplusone,\,\phih\bigr)_{\LtwoGN}\\[1mm]
		- \csq\,\ah\bigl(\zhprev,\,\phih\bigr)-b \,\ah\bigl(C_n, \phih\bigr)
		- \alpha( \zdothprev,\phih)_{\LtwoGabs}	\\-  \bigl(k
		\uhn\,D_n
		+ \udothn\,C_n,	\phih	\bigr)_{\Ltwo}
	\end{multlined}
\end{eq}
for all $\phih \in \Vh$. This formulation corresponds to the so-called effective mass matrix formulation of the Newmark scheme. It is solved for $\zdotdothnplusone$, after which we employ  \eqref{def uhn dotdot via zhn} to obtain  $\udotdothnplusone$, and then \eqref{Newmark relations uh}  to recover $\uhnplusone$.  

To (formally) recover the Newmark scheme for the Westervelt equation in the limit $\tau \rightarrow 0$ where we expect $\zhnplusone$ and $\uhnplusone$ to be close, we set $\betanm = \deltanm$ and $\gammanm = \etanm$, so that $\zeta_\tau \rightarrow 1$ and $\lambda_\tau \rightarrow \gammanm \Deltat$ as $\tau \rightarrow 0$ for any fixed $\Deltat > 0$.
	
	\subsection{Numerical experiments}
	We next conduct numerical experiments in two computational settings: one academic example with a manufactured solution and one where practically relevant focusing of ultrasound waves can be observed. All simulations are performed using the open-source finite element library FEniCSx 0.9.0~\cite{fenicsx}.\footnote{All source codes are available at \href{https://github.com/nikolicvanja/dg_jmgt.}{https://github.com/nikolicvanja/dg\_jmgt}
}
	\subsubsection{Known solution} \label{Sec: numerics known solution} The first set of experiments is performed on the domain $\Omega= (0,1/4) \times (0,1/4)$ until final time $T=0.5$. The (non-dimensionalized) acoustic parameters are chosen as
	\begin{eq}
		c =\, 1, \quad b = 0.01, \quad k = 0.7, \quad \alpha = c
	\end{eq}
	and we consider two different relaxation regimes: $\tau =10^{-6}$ and $\tau =1$. 
	We set $\GN= \emptyset$, so that we have absorbing-type conditions on the whole boundary. 
	The source functions $f$ and $\gabs$ and initial data $(\uzero, \uone, \utwo)$ are chosen so that the function
	\begin{eq}
		u(x,y,t) =  \sin(\pi x/4) \sin(\pi y/4) \cos(t).
	\end{eq} 
	is the exact solution of the problem.
	The experiments are conducted with Newmark parameters $(\betanm, \gammanm) = (\frac14, \frac12)$ and $(\deltanm, \etanm) =(\betanm, \gammanm)$. The dG stabilization parameter is chosen as $\chi= 6 q^2$ and we take the time step to be $\Deltat= \calO(h^{(q+1)/2})$.
	\begin{figure}[h]
		\centering
		\begin{subfigure}[b]{0.49\textwidth}
			\centering
			\resizebox{\textwidth}{!}{
\begin{tikzpicture}

\definecolor{darkgray176}{RGB}{176,176,176}
\definecolor{darkorange25512714}{RGB}{255,127,14}
\definecolor{forestgreen4416044}{RGB}{44,160,44}
\definecolor{lightgray204}{RGB}{204,204,204}
\definecolor{steelblue31119180}{RGB}{31,119,180}

\begin{axis}[
legend cell align={left},
legend style={
  fill opacity=0.8,
  draw opacity=1,
  text opacity=1,
  at={(0.03,0.97)},
  anchor=north west,
  draw=lightgray204
},
log basis x={10},
log basis y={10},
minor xtick={0.0002,0.0003,0.0004,0.0005,0.0006,0.0007,0.0008,0.0009,0.002,0.003,0.004,0.005,0.006,0.007,0.008,0.009,0.02,0.03,0.04,0.05,0.06,0.07,0.08,0.09,0.2,0.3,0.4,0.5,0.6,0.7,0.8,0.9,2,3,4,5,6,7,8,9},
tick align=outside,
tick pos=left,
title={\(\displaystyle \energyseminorm{I_h u-\uh}, \quad \tau=10^{-6}\)},
x grid style={darkgray176},
xlabel={\(\displaystyle h\)},
xmin=0.00663833660062115, xmax=0.0653820081580626,
xmode=log,
xtick style={color=black},
xtick={0.0001,0.001,0.01,0.1,1},
xticklabels={
  \(\displaystyle {10^{-4}}\),
  \(\displaystyle {10^{-3}}\),
  \(\displaystyle {10^{-2}}\),
  \(\displaystyle {10^{-1}}\),
  \(\displaystyle {10^{0}}\)
},
y grid style={darkgray176},
ymin=3.33092840191587e-06, ymax=0.720262487196451,
ymode=log,
ytick style={color=black},
ytick={1e-07,1e-06,1e-05,0.0001,0.001,0.01,0.1,1,10},
yticklabels={
  \(\displaystyle {10^{-7}}\),
  \(\displaystyle {10^{-6}}\),
  \(\displaystyle {10^{-5}}\),
  \(\displaystyle {10^{-4}}\),
  \(\displaystyle {10^{-3}}\),
  \(\displaystyle {10^{-2}}\),
  \(\displaystyle {10^{-1}}\),
  \(\displaystyle {10^{0}}\),
  \(\displaystyle {10^{1}}\)
}
]
\addplot [semithick, steelblue31119180, mark=x, mark size=3, mark options={solid}]
table {%
0.058925565098879 0.412091999259603
0.0294627825494395 0.166399666953334
0.0147313912747197 0.0756662185273275
0.00736569563735987 0.0362094790090931
};
\addlegendentry{$q=1$}
\addplot [semithick, darkorange25512714, mark=square*, mark size=3, mark options={solid}]
table {%
0.058925565098879 0.0271771432735682
0.0294627825494395 0.00559118427582754
0.0147313912747197 0.00122499149012683
0.00736569563735987 0.000285598129095356
};
\addlegendentry{$q=2$}
\addplot [semithick, forestgreen4416044, mark=triangle*, mark size=3, mark options={solid}]
table {%
0.058925565098879 0.00313283541410106
0.0294627825494395 0.000380683365399185
0.0147313912747197 4.69048945512287e-05
0.00736569563735987 5.82186205931615e-06
};
\addlegendentry{$q=3$}
\draw (axis cs:0.0416666666666667,0.183303753374437) node[
  scale=0.5,
  text=black,
  rotate=0.0,
  yshift=-4mm
]{1.30831};
\draw (axis cs:0.0208333333333333,0.0785462185318752) node[
  scale=0.5,
  text=black,
  rotate=0.0,
  yshift=-4mm
]{1.13693};
\draw (axis cs:0.0104166666666667,0.036640398909083) node[
  scale=0.5,
  text=black,
  rotate=0.0,
  yshift=-1cm
]{1.06328};
\draw (axis cs:0.0416666666666667,0.00862882865197892) node[
  scale=0.5,
  text=black,
  rotate=0.0,
  yshift=-4mm
]{2.28117};
\draw (axis cs:0.0208333333333333,0.00183196207581752) node[
  scale=0.5,
  text=black,
  rotate=0.0,
  yshift=-4mm
]{2.19038};
\draw (axis cs:0.0104166666666667,0.000414039957119598) node[
  scale=0.5,
  text=black,
  rotate=0.0,
  yshift=-4mm
]{2.10071};
\draw (axis cs:0.0416666666666667,0.000764449462720756) node[
  scale=0.5,
  text=black,
  rotate=0.0,
  yshift=-4mm
]{3.04081};
\draw (axis cs:0.0208333333333333,9.35382137129701e-05) node[
  scale=0.5,
  text=black,
  rotate=0.0,
  yshift=-4mm
]{3.02078};
\draw (axis cs:0.0104166666666667,1.15674618967244e-05) node[
  scale=0.5,
  text=black,
  rotate=0.0,
  yshift=-4mm
]{3.01019};
\end{axis}

\end{tikzpicture}}
		\end{subfigure}
		\begin{subfigure}[b]{0.49\textwidth}
			\centering
			\resizebox{\textwidth}{!}{
\begin{tikzpicture}

\definecolor{darkgray176}{RGB}{176,176,176}
\definecolor{darkorange25512714}{RGB}{255,127,14}
\definecolor{forestgreen4416044}{RGB}{44,160,44}
\definecolor{lightgray204}{RGB}{204,204,204}
\definecolor{steelblue31119180}{RGB}{31,119,180}

\begin{axis}[
legend cell align={left},
legend style={
  fill opacity=0.8,
  draw opacity=1,
  text opacity=1,
  at={(0.03,0.97)},
  anchor=north west,
  draw=lightgray204
},
log basis x={10},
log basis y={10},
minor xtick={0.0002,0.0003,0.0004,0.0005,0.0006,0.0007,0.0008,0.0009,0.002,0.003,0.004,0.005,0.006,0.007,0.008,0.009,0.02,0.03,0.04,0.05,0.06,0.07,0.08,0.09,0.2,0.3,0.4,0.5,0.6,0.7,0.8,0.9,2,3,4,5,6,7,8,9},
tick align=outside,
tick pos=left,
title={\(\displaystyle \energyseminorm{I_h z-\zh}, \quad \tau=10^{-6}\)},
x grid style={darkgray176},
xlabel={\(\displaystyle h\)},
xmin=0.00663833660062115, xmax=0.0653820081580626,
xmode=log,
xtick style={color=black},
xtick={0.0001,0.001,0.01,0.1,1},
xticklabels={
  \(\displaystyle {10^{-4}}\),
  \(\displaystyle {10^{-3}}\),
  \(\displaystyle {10^{-2}}\),
  \(\displaystyle {10^{-1}}\),
  \(\displaystyle {10^{0}}\)
},
y grid style={darkgray176},
ymin=3.33086658235264e-06, ymax=0.720247534649615,
ymode=log,
ytick style={color=black},
ytick={1e-07,1e-06,1e-05,0.0001,0.001,0.01,0.1,1,10},
yticklabels={
  \(\displaystyle {10^{-7}}\),
  \(\displaystyle {10^{-6}}\),
  \(\displaystyle {10^{-5}}\),
  \(\displaystyle {10^{-4}}\),
  \(\displaystyle {10^{-3}}\),
  \(\displaystyle {10^{-2}}\),
  \(\displaystyle {10^{-1}}\),
  \(\displaystyle {10^{0}}\),
  \(\displaystyle {10^{1}}\)
}
]
\addplot [semithick, steelblue31119180, mark=x, mark size=3, mark options={solid}]
table {%
0.058925565098879 0.412083485508657
0.0294627825494395 0.166401359132916
0.0147313912747197 0.0756659162393115
0.00736569563735987 0.0362089925793039
};
\addlegendentry{$q=1$}
\addplot [semithick, darkorange25512714, mark=square*, mark size=3, mark options={solid}]
table {%
0.058925565098879 0.0271796325333084
0.0294627825494395 0.00559206913469982
0.0147313912747197 0.0012252278653444
0.00736569563735987 0.000285651052010327
};
\addlegendentry{$q=2$}
\addplot [semithick, forestgreen4416044, mark=triangle*, mark size=3, mark options={solid}]
table {%
0.058925565098879 0.00313305899293057
0.0294627825494395 0.00038071424163314
0.0147313912747197 4.69057297809545e-05
0.00736569563735987 5.82175342752453e-06
};
\addlegendentry{$q=3$}
\draw (axis cs:0.0416666666666667,0.183302791882137) node[
  scale=0.5,
  text=black,
  rotate=0.0,
  yshift=-4mm
]{1.30827};
\draw (axis cs:0.0208333333333333,0.0785464610158015) node[
  scale=0.5,
  text=black,
  rotate=0.0,
  yshift=-4mm
]{1.13695};
\draw (axis cs:0.0104166666666667,0.0366400796097876) node[
  scale=0.5,
  text=black,
  rotate=0.0,
  yshift=-1cm
]{1.06330};
\draw (axis cs:0.0416666666666667,0.00862990661879828) node[
  scale=0.5,
  text=black,
  rotate=0.0,
  yshift=-4mm
]{2.28107};
\draw (axis cs:0.0208333333333333,0.00183228378672509) node[
  scale=0.5,
  text=black,
  rotate=0.0,
  yshift=-4mm
]{2.19033};
\draw (axis cs:0.0104166666666667,0.000414118265785414) node[
  scale=0.5,
  text=black,
  rotate=0.0,
  yshift=-4mm
]{2.10072};
\draw (axis cs:0.0416666666666667,0.000764507741921475) node[
  scale=0.5,
  text=black,
  rotate=0.0,
  yshift=-4mm
]{3.04079};
\draw (axis cs:0.0208333333333333,9.35428397980546e-05) node[
  scale=0.5,
  text=black,
  rotate=0.0,
  yshift=-4mm
]{3.02087};
\draw (axis cs:0.0104166666666667,1.15674569646996e-05) node[
  scale=0.5,
  text=black,
  rotate=0.0,
  yshift=-4mm
]{3.01024};
\end{axis}

\end{tikzpicture}}
		\end{subfigure}
		\caption{Plots of discrete errors for pressure and the combined quantity $z= \tau u_t+u$ with the relaxation parameter $\tau =10^{-6}$ }
		\label{fig: KnownSolution_tau=1e-6}
	\end{figure}
	\begin{figure}[h]
		\centering
		\begin{subfigure}[b]{0.49\textwidth}
			\centering
			\resizebox{\textwidth}{!}{
\begin{tikzpicture}

\definecolor{darkgray176}{RGB}{176,176,176}
\definecolor{darkorange25512714}{RGB}{255,127,14}
\definecolor{forestgreen4416044}{RGB}{44,160,44}
\definecolor{lightgray204}{RGB}{204,204,204}
\definecolor{steelblue31119180}{RGB}{31,119,180}

\begin{axis}[
legend cell align={left},
legend style={
  fill opacity=0.8,
  draw opacity=1,
  text opacity=1,
  at={(0.03,0.97)},
  anchor=north west,
  draw=lightgray204
},
log basis x={10},
log basis y={10},
minor xtick={0.0002,0.0003,0.0004,0.0005,0.0006,0.0007,0.0008,0.0009,0.002,0.003,0.004,0.005,0.006,0.007,0.008,0.009,0.02,0.03,0.04,0.05,0.06,0.07,0.08,0.09,0.2,0.3,0.4,0.5,0.6,0.7,0.8,0.9,2,3,4,5,6,7,8,9},
tick align=outside,
tick pos=left,
title={\(\displaystyle \energyseminorm{I_h u-\uh}, \quad \tau=1\)},
x grid style={darkgray176},
xlabel={\(\displaystyle h\)},
xmin=0.00663833660062115, xmax=0.0653820081580626,
xmode=log,
xtick style={color=black},
xtick={0.0001,0.001,0.01,0.1,1},
xticklabels={
  \(\displaystyle {10^{-4}}\),
  \(\displaystyle {10^{-3}}\),
  \(\displaystyle {10^{-2}}\),
  \(\displaystyle {10^{-1}}\),
  \(\displaystyle {10^{0}}\)
},
y grid style={darkgray176},
ymin=9.02794987352811e-07, ymax=0.162960761151248,
ymode=log,
ytick style={color=black},
ytick={1e-08,1e-07,1e-06,1e-05,0.0001,0.001,0.01,0.1,1,10},
yticklabels={
  \(\displaystyle {10^{-8}}\),
  \(\displaystyle {10^{-7}}\),
  \(\displaystyle {10^{-6}}\),
  \(\displaystyle {10^{-5}}\),
  \(\displaystyle {10^{-4}}\),
  \(\displaystyle {10^{-3}}\),
  \(\displaystyle {10^{-2}}\),
  \(\displaystyle {10^{-1}}\),
  \(\displaystyle {10^{0}}\),
  \(\displaystyle {10^{1}}\)
}
]
\addplot [semithick, steelblue31119180, mark=x, mark size=3, mark options={solid}]
table {%
0.058925565098879 0.0940051189574492
0.0294627825494395 0.0424488982760546
0.0147313912747197 0.019933265370761
0.00736569563735987 0.00965987525455922
};
\addlegendentry{$q=1$}
\addplot [semithick, darkorange25512714, mark=square*, mark size=3, mark options={solid}]
table {%
0.058925565098879 0.00628080192659537
0.0294627825494395 0.00137841327823235
0.0147313912747197 0.000317919106488498
0.00736569563735987 7.59472524887307e-05
};
\addlegendentry{$q=2$}
\addplot [semithick, forestgreen4416044, mark=triangle*, mark size=3, mark options={solid}]
table {%
0.058925565098879 0.000822194338193781
0.0294627825494395 0.000101652870200046
0.0147313912747197 1.25900945818854e-05
0.00736569563735987 1.56502284060869e-06
};
\addlegendentry{$q=3$}
\draw (axis cs:0.0416666666666667,0.0442188051478786) node[
  scale=0.5,
  text=black,
  rotate=0.0,
  yshift=-4mm
]{1.14701};
\draw (axis cs:0.0208333333333333,0.0203620020006921) node[
  scale=0.5,
  text=black,
  rotate=0.0,
  yshift=-4mm
]{1.09055};
\draw (axis cs:0.0104166666666667,0.00971343913759757) node[
  scale=0.5,
  text=black,
  rotate=0.0,
  yshift=-1cm
]{1.04510};
\draw (axis cs:0.0416666666666667,0.0020596589472647) node[
  scale=0.5,
  text=black,
  rotate=0.0,
  yshift=-4mm
]{2.18794};
\draw (axis cs:0.0208333333333333,0.000463389382394417) node[
  scale=0.5,
  text=black,
  rotate=0.0,
  yshift=-4mm
]{2.11628};
\draw (axis cs:0.0104166666666667,0.000108770816394941) node[
  scale=0.5,
  text=black,
  rotate=0.0,
  yshift=-4mm
]{2.06559};
\draw (axis cs:0.0416666666666667,0.000202369520991716) node[
  scale=0.5,
  text=black,
  rotate=0.0,
  yshift=-4mm
]{3.01583};
\draw (axis cs:0.0208333333333333,2.50421930482529e-05) node[
  scale=0.5,
  text=black,
  rotate=0.0,
  yshift=-4mm
]{3.01329};
\draw (axis cs:0.0104166666666667,3.10722624492915e-06) node[
  scale=0.5,
  text=black,
  rotate=0.0,
  yshift=-4mm
]{3.00803};
\end{axis}

\end{tikzpicture}}
		\end{subfigure}
		\begin{subfigure}[b]{0.49\textwidth}
			\centering
			\resizebox{\textwidth}{!}{
\begin{tikzpicture}

\definecolor{darkgray176}{RGB}{176,176,176}
\definecolor{darkorange25512714}{RGB}{255,127,14}
\definecolor{forestgreen4416044}{RGB}{44,160,44}
\definecolor{lightgray204}{RGB}{204,204,204}
\definecolor{steelblue31119180}{RGB}{31,119,180}

\begin{axis}[
legend cell align={left},
legend style={
  fill opacity=0.8,
  draw opacity=1,
  text opacity=1,
  at={(0.97,0.03)},
  anchor=south east,
  draw=lightgray204
},
log basis x={10},
log basis y={10},
minor xtick={0.0002,0.0003,0.0004,0.0005,0.0006,0.0007,0.0008,0.0009,0.002,0.003,0.004,0.005,0.006,0.007,0.008,0.009,0.02,0.03,0.04,0.05,0.06,0.07,0.08,0.09,0.2,0.3,0.4,0.5,0.6,0.7,0.8,0.9,2,3,4,5,6,7,8,9},
tick align=outside,
tick pos=left,
title={\(\displaystyle \energyseminorm{I_h z-\zh}, \quad \tau=1\)},
x grid style={darkgray176},
xlabel={\(\displaystyle h\)},
xmin=0.00663833660062115, xmax=0.0653820081580626,
xmode=log,
xtick style={color=black},
xtick={0.0001,0.001,0.01,0.1,1},
xticklabels={
  \(\displaystyle {10^{-4}}\),
  \(\displaystyle {10^{-3}}\),
  \(\displaystyle {10^{-2}}\),
  \(\displaystyle {10^{-1}}\),
  \(\displaystyle {10^{0}}\)
},
y grid style={darkgray176},
ymin=7.45722198752268e-06, ymax=1.02671195204153,
ymode=log,
ytick style={color=black},
ytick={1e-07,1e-06,1e-05,0.0001,0.001,0.01,0.1,1,10,100},
yticklabels={
  \(\displaystyle {10^{-7}}\),
  \(\displaystyle {10^{-6}}\),
  \(\displaystyle {10^{-5}}\),
  \(\displaystyle {10^{-4}}\),
  \(\displaystyle {10^{-3}}\),
  \(\displaystyle {10^{-2}}\),
  \(\displaystyle {10^{-1}}\),
  \(\displaystyle {10^{0}}\),
  \(\displaystyle {10^{1}}\),
  \(\displaystyle {10^{2}}\)
}
]
\addplot [semithick, steelblue31119180, mark=x, mark size=3, mark options={solid}]
table {%
0.058925565098879 0.59960265530021
0.0294627825494395 0.316590456559783
0.0147313912747197 0.163163436071043
0.00736569563735987 0.0828884025189548
};
\addlegendentry{$q=1$}
\addplot [semithick, darkorange25512714, mark=square*, mark size=3, mark options={solid}]
table {%
0.058925565098879 0.0497529251938092
0.0294627825494395 0.0117947453565127
0.0147313912747197 0.00315587945833633
0.00736569563735987 0.00079701120021143
};
\addlegendentry{$q=2$}
\addplot [semithick, forestgreen4416044, mark=triangle*, mark size=3, mark options={solid}]
table {%
0.058925565098879 0.00596203691489178
0.0294627825494395 0.000782754592384649
0.0147313912747197 0.000100177267925034
0.00736569563735987 1.27691544991291e-05
};
\addlegendentry{$q=3$}
\draw (axis cs:0.0416666666666667,0.304985170810019) node[
  scale=0.5,
  text=black,
  rotate=0.0,
  yshift=-4mm
]{0.92139};
\draw (axis cs:0.0208333333333333,0.159095674022273) node[
  scale=0.5,
  text=black,
  rotate=0.0,
  yshift=-4mm
]{0.95630};
\draw (axis cs:0.0104166666666667,0.0814059869853678) node[
  scale=0.5,
  text=black,
  rotate=0.0,
  yshift=-4mm
]{0.97708};
\draw (axis cs:0.0416666666666667,0.0169571020775154) node[
  scale=0.5,
  text=black,
  rotate=0.0,
  yshift=-4mm
]{2.07664};
\draw (axis cs:0.0208333333333333,0.0042707340525481) node[
  scale=0.5,
  text=black,
  rotate=0.0,
  yshift=-4mm
]{1.90203};
\draw (axis cs:0.0104166666666667,0.0011101724751846) node[
  scale=0.5,
  text=black,
  rotate=0.0,
  yshift=-4mm
]{1.98537};
\draw (axis cs:0.0416666666666667,0.00151219633969871) node[
  scale=0.5,
  text=black,
  rotate=0.0,
  yshift=-4mm
]{2.92917};
\draw (axis cs:0.0208333333333333,0.000196017769845556) node[
  scale=0.5,
  text=black,
  rotate=0.0,
  yshift=-4mm
]{2.96600};
\draw (axis cs:0.0104166666666667,2.50359284949321e-05) node[
  scale=0.5,
  text=black,
  rotate=0.0,
  yshift=-4mm
]{2.97182};
\end{axis}

\end{tikzpicture}}
		\end{subfigure}
		\caption{Plots of discrete errors for pressure and the combined quantity $z= \tau u_t+u$ with the relaxation parameter $\tau=1$}
		\label{fig: KnownSolution_tau=1}
	\end{figure}
	
	Figures~\ref{fig: KnownSolution_tau=1e-6} and~\ref{fig: KnownSolution_tau=1} display empirically obtained convergence rates of the discrete errors $\uh- \interpolant u$ and $\zh-\interpolant z$, measured in the energy semi-norm $\energyseminorm{\cdot}$, for the pressure and combined quantity $z$ for polynomial degrees $q \in \{1,2,3\}$. The observed rates match the expected orders based on Theorem~\ref{thm: main} and \eqref{conv rate z}. 
	\subsubsection{Focused ultrasound}
	In the second experiment, we consider a setting motivated by therapeutic applications of focused ultrasound. The domain $\Omega$ is the rectangle $(0, l_x) \times ( -l_y/2, l_y/2)$, where $l_x = \SI{6.4}{cm}$ and $l_y =\SI{4.8}{cm}$, with an arc protrusion on the left side with depth $\SI{0.26}{cm}$, representing the curved transducer surface; see~Figure~\ref{fig: SnapshotFUS}.   On the arc part of the boundary, we prescribe Neumann data
	\begin{eq}
		\csq	\frac{\partial u}{\partial n}+ b 	\frac{\partial u_t}{\partial n} = \gN \quad \text{on }\ \GN  \quad \text{with} \quad 	\gN(t) = a \sin(2 \pi \omega t),
	\end{eq}
\noindent modeling a sinusoidally driven transducer. We choose the source frequency and amplitude as
	\begin{eq}
		\omega= \SI{0.5}{MHz}, \qquad a= \SI{1.5e9}{Pa/m}.
	\end{eq} 
	On the rest of the boundary $\Gabs= \partial \Omega \setminus \GN$, we prescribe absorbing boundary conditions $c(\tau \ut+u)_t + 	\csq	\frac{\partial u}{\partial n}+ b 	\frac{\partial u_t}{\partial n} =0$, that is,  \eqref{tau abcs} with $\alpha=c$ and $\gabs=0$. The acoustic medium parameters are chosen as
	\begin{eq} \label{med par}
		c = \SI{1500}{m/s}, \qquad b = \SI{4e-6}{m^2/s}. 
	\end{eq}
	The nonlinearity coefficient is taken to be spatially varying:
	\begin{eq}
		k(x, y) = \frac{\betaa(x, y)}{\rho \csq} \quad \text{with} \quad
		\betaa(x,y) = \begin{cases}
			0,    & x \leq \SI{0.5}{cm}, \\
			6.25, & \SI{0.5}{cm} \leq x \leq \SI{2}{cm}, \\
			4.5,  & \SI{2}{cm}  \leq x \leq \SI{6.4}{cm}.
		\end{cases}
	\end{eq}
	The simulation is run until $T=\SI{62}{\mu s}$  using polynomial degree $q=1$, with $\Deltat = \calO(h^{(q+1)/2})$, Newmark parameters and dG stabilization parameter $\chi$ as in Section~\ref{Sec: numerics known solution}.  Initial conditions are set to zero.
		
	\begin{figure}[h]
		\centering
		\includegraphics[scale=0.28, trim=0 4cm 1cm 1.2cm, clip]{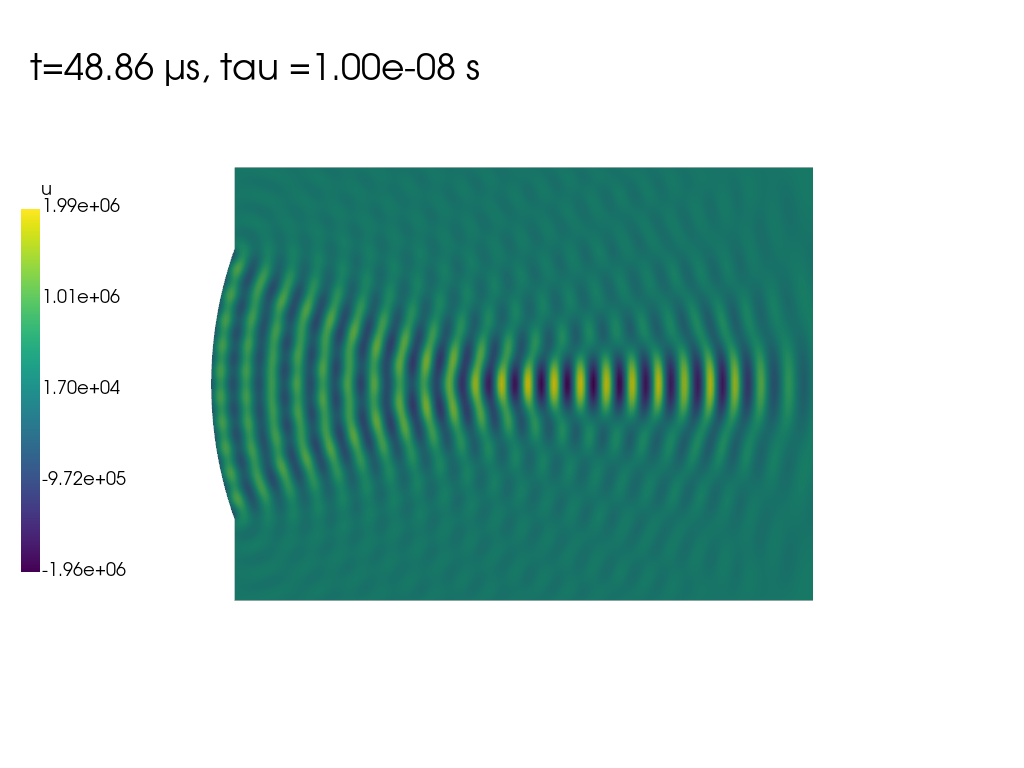}
		\hspace*{-1.8cm}
		\caption{ Snapshot of the pressure field with the relaxation parameter $\tau =\SI{10}{ns}$}
		\label{fig: SnapshotFUS}
	\end{figure}
	Figure~\ref{fig: SnapshotFUS} shows a snapshot of the focused pressure field at around $80\%$ of the final time with the relaxation time $\tau =\SI{10}{n s}$, where we can observe focusing of sound waves. 
	
	Figure~\ref{fig: Comparison_different_tau}	shows on the left plot a comparison of the pressure along the axis of symmetry at $t= 80\% \, T$ for two different values of the relaxation parameter: $\tau = \SI{10}{ns}$ and $\tau = \SI{1}{\mu s}$. The two settings correspond to the regimes $\tau \omega \ll 1$ and $\tau \omega = \calO(1)$. We observe that $\tau =\SI{1}{\mu s}$ introduces a significant reduction in amplitude, and that there is a phase shift between the two settings. These observations are consistent with the linear frequency-domain analysis; see, for example,~\cite[Ch.~4.1]{rudenko1977} and~\cite[Fig.~4.1]{rudenko1977}. Furthermore, as the nonlinearity generates higher harmonics with larger 
	effective frequencies, these are more strongly attenuated.
	
		\begin{figure}[h]
		{\scalebox{0.8}{\input{figures/FUS/JMGT_pressure_axis_80pct_tau_comparison_vark_q=1.tex}}} \hspace*{1mm}	{\scalebox{0.8}{
\begin{tikzpicture}

\definecolor{darkgray176}{RGB}{176,176,176}
\definecolor{darkorange25512714}{RGB}{255,127,14}
\definecolor{forestgreen4416044}{RGB}{44,160,44}
\definecolor{gray}{RGB}{128,128,128}
\definecolor{lightgray204}{RGB}{204,204,204}
\definecolor{steelblue31119180}{RGB}{31,119,180}

\begin{axis}[
legend cell align={left},
legend style={
  fill opacity=0.8,
  draw opacity=1,
  text opacity=1,
  at={(0.03,0.97)},
  anchor=north west,
  draw=lightgray204
},
log basis x={10},
log basis y={10},
tick align=outside,
tick pos=left,
title={\(\displaystyle  \max_{t \in [0,T]} \|u^h(t)-u^{h, \tau=0}(t)\|_{L^2(\Omega)}\)},
x grid style={darkgray176},
xlabel={\(\displaystyle \tau\)},
xmin=5.18861563251408e-13, xmax=9.63648177881565e-07,
xmode=log,
xtick style={color=black},
xtick={1e-14,1e-13,1e-12,1e-11,1e-10,1e-09,1e-08,1e-07,1e-06,1e-05},
xticklabels={
  \(\displaystyle {10^{-14}}\),
  \(\displaystyle {10^{-13}}\),
  \(\displaystyle {10^{-12}}\),
  \(\displaystyle {10^{-11}}\),
  \(\displaystyle {10^{-10}}\),
  \(\displaystyle {10^{-9}}\),
  \(\displaystyle {10^{-8}}\),
  \(\displaystyle {10^{-7}}\),
  \(\displaystyle {10^{-6}}\),
  \(\displaystyle {10^{-5}}\)
},
y grid style={darkgray176},
ymin=0.0117795989434621, ymax=72476.7307520947,
ymode=log,
ytick style={color=black},
ytick={0.001,0.01,0.1,1,10,100,1000,10000,100000,1000000},
yticklabels={
  \(\displaystyle {10^{-3}}\),
  \(\displaystyle {10^{-2}}\),
  \(\displaystyle {10^{-1}}\),
  \(\displaystyle {10^{0}}\),
  \(\displaystyle {10^{1}}\),
  \(\displaystyle {10^{2}}\),
  \(\displaystyle {10^{3}}\),
  \(\displaystyle {10^{4}}\),
  \(\displaystyle {10^{5}}\),
  \(\displaystyle {10^{6}}\)
}
]
\addplot [semithick, steelblue31119180, mark=x, mark size=3, mark options={solid}]
table {%
1e-12 0.0239731171976139
5e-12 0.119865487365832
1e-11 0.239730728064071
5e-11 1.19864374251348
1e-10 2.39726258237479
5e-10 11.9852850118161
1e-09 23.967839686299
1e-08 238.860537571715
5e-08 1138.59999514177
1e-07 2031.44385342149
5e-07 3860.53203550446
};
\addlegendentry{$h=0.0022$}
\addplot [semithick, darkorange25512714, mark=*, mark size=3, mark options={solid}]
table {%
1e-12 0.0540340203227818
5e-12 0.270170028158215
1e-11 0.540339872526749
5e-11 2.70169199291085
1e-10 5.40336546717335
5e-10 27.0160637543924
1e-09 54.0300733671529
1e-08 539.491256741763
5e-08 2627.29224260214
1e-07 4893.30746554668
5e-07 10806.3153357877
};
\addlegendentry{$h=0.0011$}
\addplot [semithick, forestgreen4416044, mark=square*, mark size=3, mark options={solid}]
table {%
1e-12 0.071225349123804
5e-12 0.356126704271462
1e-11 0.712253305069089
5e-11 3.56126235227934
1e-10 7.12251409896679
5e-10 35.6121111265278
1e-09 71.2228933936975
1e-08 711.636579571119
5e-08 3505.65033236206
1e-07 6725.22228549195
5e-07 18008.7194015281
};
\addlegendentry{$h=0.0006$}
\addplot [semithick, gray, dash pattern=]
table {%
1e-12 0.071225349123804
5e-12 0.35612674561902
1e-11 0.71225349123804
5e-11 3.5612674561902
1e-10 7.1225349123804
5e-10 35.612674561902
1e-09 71.2253491238041
1e-08 712.25349123804
5e-08 3561.2674561902
1e-07 7122.5349123804
5e-07 35612.674561902
};
\addlegendentry{$\calO(\tau)$}
\end{axis}

\end{tikzpicture}}}
		\caption{({\bf left}) Pressure along the axis at $t= 80\%\, T$ for two different values of the relaxation parameter\  ({\bf right}) The vanishing $\tau$ convergence for different mesh sizes}
		\label{fig: Comparison_different_tau}
	\end{figure}

	The right plot in Figure~\ref{fig: Comparison_different_tau} illustrates the $\tau$ convergence by tracking the error
	\[
	\max_{t \in [0,T]} \| \uh(t) - \uhtauzero(t)\|_{\Ltwo}.
	\]
	The numerically observed rate is in agreement with the theoretically predicted linear order $\mathcal{O}(\tau)$ established in Theorem~\ref{thm: tau limit}, and the clustering of curves for finer spatial resolutions is consistent with $h$ convergence.
\section{Proof of Theorem~\ref{thm: main}} \label{Sec: proof}
This section is dedicated to proving Theorem~\ref{thm: main}, which is the backbone of the $\tau$ limiting analysis of the semi-discrete problem via the uniform bounds it provides. The proof will exploit some background results from the dG methodology, which we first recall.
\subsection{Background results on dG methods} \label{sec: dG discretization}
The following result contains  inverse and discrete trace inequalities in $\Vh$. Below we employ the notion of the mesh regularity parameter as introduced in~\cite[Def.~1.38]{di2011mathematical}.
\begin{lemma}[see~ {\cite[Lemmas~1.46 and 1.50 and Remark 1.47]{di2011mathematical}}]\label{lem: inverse ineq}
	Let the assumptions on $\triangles$ in Section~\ref{Sec: main results} hold. For $1 \leq \ell,\ell' \leq \infty$, $\phih \in \Vh$, and $K \in \triangles$:
	\begin{eq}\label{inverse inequality}
		\| \phih \|_{L^{\ell}(K)} \leq \invconstant \hK^{d(1/\ell-1/\ell')}\| \phih \|_{L^{\ell'}(K)},
	\end{eq}
	where $\invconstant$ depends on the mesh regularity parameter, $d$, $q$, $\ell$, $\ell'$. For $\phih \in \Vhq$ and face $F_K \in \calFh$ of $K \in \triangles$:
	\begin{eq}\label{face element trace ineq}
		\Ltwonorm{\phih}{F_K} \leq \Ctr h_K^{-1/2} \Ltwonorm{\phih}{K}, \quad
		\Ltwonorm{\phih}{\partial K} \leq \Ctr h_K^{-1/2} \Npartial^{-1/2} \Ltwonorm{\phih}{K},
	\end{eq}
	where $\Ctr$ depends on the mesh regularity, $d$, $q$, and $\Npartial = \displaystyle \max_{K \in \triangles} \mathrm{card}(F_K) = d+1$ is the maximum number of mesh faces composing the boundary of mesh elements.
\end{lemma}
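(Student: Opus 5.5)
The plan is a standard scaling argument: pull each element back to a fixed reference simplex, use that all norms are equivalent on the finite-dimensional space $\mathbb{P}^q$, and push forward again. Shape regularity and quasi-uniformity of $\triangles$ make the constants independent of $K$ and $h$.

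\textbf{Step 1 (affine pullback).} Fix the reference simplex $\widehat K$. For each $K\in\triangles$ take the affine bijection $T_K:\widehat K\to K$, $T_K(\hat{\bfx})=B_K\hat{\bfx}+b_K$. By shape regularity (the mesh regularity parameter of \cite[Def.~1.38]{di2011mathematical}),
\[
\|B_K\|\lesssim h_K, \qquad \|B_K^{-1}\|\lesssim h_K^{-1}, \qquad |\det B_K|\simeq h_K^d .
\]
For $\phih\in\Vh$ set $\hat\phi=\phih|_K\circ T_K\in\mathbb{P}^q(\widehat K)$. A change of variables gives
\[
\|\phih\|_{L^\ell(K)}=|\det B_K|^{1/\ell}\,\|\hat\phi\|_{L^\ell(\widehat K)},
\]
with the convention $1/\ell=0$ when $\ell=\infty$.

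\textbf{Step 2 (inverse inequality).} On the finite-dimensional space $\mathbb{P}^q(\widehat K)$ all norms are equivalent, so $\|\hat\phi\|_{L^\ell(\widehat K)}\le \widehat C\,\|\hat\phi\|_{L^{\ell'}(\widehat K)}$, where $\widehat C$ depends only on $d,q,\ell,\ell'$. Combining this with the two scaling identities from Step 1 yields
\[
\|\phih\|_{L^\ell(K)}\lesssim |\det B_K|^{1/\ell-1/\ell'}\,\|\phih\|_{L^{\ell'}(K)}\simeq h_K^{d(1/\ell-1/\ell')}\,\|\phih\|_{L^{\ell'}(K)},
\]
which is \eqref{inverse inequality}. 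The constant $\invconstant$ depends on $\widehat C$ and on the shape-regularity bounds only.

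\textbf{Step 3 (discrete trace inequality on one face).} Let $F_K$ be a face of $K$ and $\widehat F=T_K^{-1}(F_K)$ the corresponding face of $\widehat K$. The trace map $\mathbb{P}^q(\widehat K)\to L^2(\widehat F)$ is linear on a finite-dimensional space, hence bounded: $\|\hat\phi\|_{L^2(\widehat F)}\le \widehat C_{\rm tr}\|\hat\phi\|_{L^2(\widehat K)}$. For the face, $|F_K|/|\widehat F|\simeq h_K^{d-1}$ by shape regularity. Scaling back therefore gives
\[
\|\phih\|_{L^2(F_K)}\lesssim h_K^{(d-1)/2}\|\hat\phi\|_{L^2(\widehat F)}\lesssim h_K^{(d-1)/2}\,h_K^{-d/2}\|\phih\|_{L^2(K)}=h_K^{-1/2}\|\phih\|_{L^2(K)} .
\]
This is the first bound in \eqref{face element trace ineq}. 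For $d=1$ the face is a point with counting measure, and the same argument applies directly.

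\textbf{Step 4 (whole boundary).} Square the face bound and sum over the at most $\Npartial=d+1$ faces of $K$. This gives the second bound in \eqref{face element trace ineq}, with the $\Npartial$-dependence absorbed into $\Ctr$ exactly as in the normalization of \cite[Lemma~1.46]{di2011mathematical}.

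No step is a genuine obstacle. The only point needing care is that the constants are uniform in $K$. This rests entirely on the shape-regularity bounds for $B_K$ and $\det B_K$; note that $h_F\simeq h_K$ follows from the same bounds.
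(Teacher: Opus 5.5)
Your scaling argument is correct and is the standard proof behind the results the paper cites from Di Pietro--Ern; the paper itself gives no proof beyond that citation, and on the shape-regular simplicial meshes used here your argument needs nothing more than what you invoke. In Step~4, summing the squared face bounds actually gives $\Ltwonorm{\phih}{\partial K} \leq \Ctr \Npartial^{1/2} h_K^{-1/2} \Ltwonorm{\phih}{K}$, with exponent $+1/2$ as in the cited Remark~1.47 and consistent with the paper's penalty threshold $\chi > \Ctr^2(d+1)$, so the $\Npartial^{-1/2}$ in the statement appears to be a sign slip that you can match only by enlarging the constant (e.g.\ replacing $\Ctr$ by $\Npartial \Ctr$ in both bounds), not ``exactly as in the normalization'' of the reference.
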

In what follows, we will also need the following continuous trace inequality.
\begin{lemma}[Continuous trace inequality, see~{\cite[Lemma 1.49]{di2011mathematical}}] \label{lem: cont trace ineq}
	Let the assumptions made on $\triangles$ in Section~\ref{Sec: main results} hold.	Then, for all $v \in H^1(\triangles)$, all $K \in \triangles$, and all $F \in \calFh$,
	\begin{equation}  \label{cont trace ineq}
		\|v\|^2_{L^2(F)} \leq C_{\mathrm{cti}}\left(2\|\nabla v\|_{\LtwoK} + d\, \hK^{-1}\|v\|_{L^2(K)}\right)\|v\|_{L^2(K)},
	\end{equation}
	where the constant $C_{\mathrm{cti}}$ depends on the mesh regularity parameter.
\end{lemma}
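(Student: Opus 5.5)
The plan is the standard divergence-theorem argument on a single simplex, as in~\cite[Lemma 1.49]{di2011mathematical}. Fix $K \in \triangles$ and a face $F \subset \partial K$; the statement is only meaningful for faces of $K$, so read $F \in \calFh$ with $F \subset \partial K$. Since $v_{\vert K} \in H^1(K)$ and both sides of \eqref{cont trace ineq} are continuous in the $H^1(K)$-norm (the left side by the standard trace theorem on the Lipschitz domain $K$), it suffices to prove the estimate for $v \in C^1(\overline{K})$ and conclude by density.

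\textbf{Step 1: the vector field.} Let $a_F$ be the vertex of $K$ opposite to $F$, and define
\[
\sigma_F(\bfx) = \frac{|F|}{d\,|K|}\,(\bfx - a_F).
\]
On every face of $K$ other than $F$, the vector $\bfx - a_F$ is tangential, because those faces contain $a_F$. Hence $\sigma_F\cdot\bfn_K = 0$ there. On $F$, the product $(\bfx-a_F)\cdot\bfn_K$ equals the height of $K$ relative to $F$, which is $d|K|/|F|$. Therefore $\sigma_F\cdot\bfn_K = 1$ on $F$. Moreover,
\[
\nabla\cdot\sigma_F = \frac{|F|}{|K|}, \qquad \|\sigma_F\|_{L^\infty(K)} \le \frac{|F|\,\hK}{d\,|K|}.
\]

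\textbf{Step 2: divergence theorem.} Applying the divergence theorem to $v^2\sigma_F$ gives
\[
\|v\|_{L^2(F)}^2 = \int_{\partial K} v^2\,\sigma_F\cdot\bfn_K \dG = \int_K \bigl( 2v\,\nabla v\cdot\sigma_F + v^2\,\nabla\cdot\sigma_F \bigr)\dx.
\]
By Cauchy--Schwarz this yields
\[
\|v\|^2_{L^2(F)} \le \frac{|F|}{d\,|K|}\Bigl( 2\hK \|\nabla v\|_{\LtwoK} + d\,\|v\|_{\LtwoK}\Bigr)\|v\|_{\LtwoK}.
\]

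\textbf{Step 3: mesh regularity.} By shape regularity (the mesh regularity parameter of~\cite[Def.~1.38]{di2011mathematical}), we have $|F|\,\hK/|K| \le C$ with $C$ depending only on that parameter and $d$. Equivalently, $|F|/|K| \lesssim \hK^{-1}$. Inserting this bound gives
\[
\|v\|^2_{L^2(F)} \le C_{\mathrm{cti}}\bigl( 2\|\nabla v\|_{\LtwoK} + d\,\hK^{-1}\|v\|_{\LtwoK}\bigr)\|v\|_{\LtwoK}.
\]
Here the factor $1/d$ and the geometric constant are absorbed into $C_{\mathrm{cti}}$. For $d=1$ the same computation holds with $|F|=1$, $\sigma_F(x)=(x-a_F)/|K|$, and $\hK=|K|$.

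\textbf{Main obstacle.} The only delicate point is the geometric one: establishing $\sigma_F\cdot\bfn_K=1$ on $F$ and $|F|\hK/|K|\lesssim 1$, which reduces to the inradius bound implied by shape regularity. The density step is routine.
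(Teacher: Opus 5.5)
Your proposal is correct and takes essentially the same approach as the paper, which gives no proof of its own and simply cites \cite[Lemma 1.49]{di2011mathematical}. You reproduce that argument faithfully: the divergence theorem applied to $v^2\sigma_F$ with $\sigma_F = \frac{|F|}{d|K|}(\bfx - a_F)$, followed by the shape-regularity bound $\frac{|F|\hK}{d|K|} = \hK/H_F \lesssim 1$, where $H_F = d|K|/|F|$ is the height of $K$ over $F$; your restriction to faces $F \subset \partial K$ is the right reading of the statement.
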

In the uniform error analysis of \eqref{semi-discrete jmgt}, we frequently rely on 
coercivity and boundedness of $\asip$ defined in \eqref{def: ah}. If $\chi > \Ctr^2(d+1)$, 
then
\begin{eq}\label{coercivity}
	\asip(\phih,\phih) \gtrsim \dgseminorm{\phih}^{2}, 
	\quad \forall\, \phih \in \Vh,
\end{eq}
which follows similarly to~\cite[Lemma 4.12]{di2011mathematical}; see also~\cite[Lemma 2.6]{dewit2026}.
For functions $\psi \in H^{2}(\Omega)+\Vh$, boundedness holds in the following sense:
\begin{eq}\label{boundedness dGstar}
	|\asip(\psi, \phih)| \lesssim \dgstarseminorm{\psi}\dgseminorm{\phih},
	\quad \forall\, \phih \in \Vh,
\end{eq}
where we have introduced
	\begin{eq} \label{def dgstarseminorm}
			\dgstarseminorm{\psi} = \left( \dgseminorm{\psi}^{2} + \sum_{K\in \triangles} \hK \Ltwonorm{\nabla \psi \cdot n}{\partial K}^{2}\right)^{1/2};
		\end{eq}
see~\cite[Lemma 4.16]{di2011mathematical} and~\cite[Lemma 2.7]{dewit2026}.		
For functions $\phih \in \Vhq$, the equivalence of $\dgseminorm{\phih}$ and $\dgstarseminorm{\phih}$ can be argued as in \cite[Lemma 4.20]{di2011mathematical} so that
\begin{eq}\label{boundedness}
	|\asip(\psih, \phih)| \lesssim \dgseminorm{\psih}\dgseminorm{\phih},
	\quad \forall\, \psih, \, \phih \in \Vh.
\end{eq}
We also recall properties of the local interpolant $\locainterpolant$,  from which the corresponding global estimates for $\interpolant$  follow via~\eqref{def interpolant}.
\begin{lemma}[see~{\cite[Lemma 4.4.1, Theorem 4.4.4, Corollary 4.4.7]{brennerscott}}] 
	\label{lem: local interpolant properties}
	Let the assumptions on $\triangles$ in Section~\ref{Sec: main results} hold. 
	For $n-d/\ell>0$, $1 < \ell \leq \infty$, and $0 \leq i \leq n \leq q+1$, the following approximation and stability bounds hold 
	with constants independent of $K$ and $h$:
	\begin{eq} \label{est interpolant approx Linf}
		|\phi-\locainterpolant \phi|_{W^{i,\ell}(K)} &\lesssim h^{n-i}
		|\phi|_{W^{n,\ell}(K)},  \quad 
		| \phi - \locainterpolant \phi |_{L^{\infty}(K)} \lesssim h^{n-d/2}
		|\phi|_{H^{n}(K)}, \\
		\| \locainterpolant \phi \|_{L^\infty(K)} &\lesssim  
		\| \phi \|_{L^{\infty}(K)}.
	\end{eq}
\end{lemma}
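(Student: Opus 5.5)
The bounds in Lemma~\ref{lem: local interpolant properties} are purely local, so I would prove them on a fixed reference simplex $\hat K$ and transfer them to each $K\in\triangles$ through the affine map $F_K:\hat K\to K$, $F_K(\hat x)=B_K\hat x+b_K$. Shape regularity and quasi-uniformity give $\|B_K\|\lesssim h_K\le h$, $\|B_K^{-1}\|\lesssim h_K^{-1}$ and $|\det B_K|\simeq h_K^d$. For the transfer I would use the standard scaling identities for $\hat\phi=\phi\circ F_K$:
\[
|\hat\phi|_{W^{m,\ell}(\hat K)}\lesssim h_K^{m-d/\ell}|\phi|_{W^{m,\ell}(K)},
\qquad
|\phi|_{W^{m,\ell}(K)}\lesssim h_K^{-m+d/\ell}|\hat\phi|_{W^{m,\ell}(\hat K)} .
\]
The local continuous (Lagrange) interpolant commutes with the pull-back, that is, $\widehat{\locainterpolant\phi}=\hat{\mathcal I}\hat\phi$, because nodes map to nodes under $F_K$.

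\textbf{Step 1: boundedness of $\hat{\mathcal I}$ on $\hat K$.} The hypothesis $n-d/\ell>0$ gives the Sobolev embedding $W^{n,\ell}(\hat K)\hookrightarrow C(\overline{\hat K})$. Nodal values are therefore controlled by $\|\hat\phi\|_{W^{n,\ell}(\hat K)}$. Since $\hat{\mathcal I}\hat\phi=\sum_j\hat\phi(\hat x_j)\hat\varphi_j$ with fixed basis functions $\hat\varphi_j$, and all norms on $\mathbb P^q(\hat K)$ are equivalent, this yields
\[
\|\hat{\mathcal I}\hat\phi\|_{W^{i,\ell}(\hat K)}\lesssim\|\hat\phi\|_{L^\infty(\hat K)}\lesssim\|\hat\phi\|_{W^{n,\ell}(\hat K)} .
\]

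\textbf{Step 2: the first estimate.} The operator $\hat\phi\mapsto\hat\phi-\hat{\mathcal I}\hat\phi$ is bounded from $W^{n,\ell}(\hat K)$ to $W^{i,\ell}(\hat K)$ by Step 1. It also annihilates $\mathbb P^{n-1}\subset\mathbb P^q$, since $n\le q+1$. The Bramble--Hilbert lemma then gives $|\hat\phi-\hat{\mathcal I}\hat\phi|_{W^{i,\ell}(\hat K)}\lesssim|\hat\phi|_{W^{n,\ell}(\hat K)}$. Scaling back, the factors $h_K^{d/\ell}$ cancel and leave $h_K^{n-i}\le h^{n-i}$.

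\textbf{Step 3: the $L^\infty$ estimate.} I would apply the same argument with target space $L^\infty(\hat K)$ and source space $H^n(\hat K)$, using the embedding $H^n\hookrightarrow C$ valid for $n>d/2$. Bramble--Hilbert gives $\|\hat\phi-\hat{\mathcal I}\hat\phi\|_{L^\infty(\hat K)}\lesssim|\hat\phi|_{H^n(\hat K)}$. The $L^\infty$ norm is invariant under $F_K$, while $|\hat\phi|_{H^n(\hat K)}\lesssim h_K^{n-d/2}|\phi|_{H^n(K)}$, which produces the factor $h^{n-d/2}$.

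\textbf{Step 4: the $L^\infty$ stability bound.} This is immediate: $\|\locainterpolant\phi\|_{L^\infty(K)}\le\max_j|\phi(x_j)|\sum_j\|\hat\varphi_j\|_{L^\infty(\hat K)}\lesssim\|\phi\|_{L^\infty(K)}$. Here point evaluation makes sense because $\phi$ is continuous under the embedding assumption.

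\textbf{Main obstacle.} The only delicate point is Step 1 together with the uniformity of the constants. One must check that the Bramble--Hilbert constant and the embedding constants depend only on $\hat K$, $q$, $n$, $\ell$, and that the scaling constants depend only on the shape-regularity parameter. Given these, all hidden constants are independent of $K$ and $h$, as the statement claims.
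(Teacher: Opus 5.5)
Your proposal is correct. It is essentially the argument behind the Brenner--Scott results the paper cites in place of a proof: boundedness of the nodal interpolant on continuous functions via the Sobolev embedding (their Lemma~4.4.1), the Bramble--Hilbert lemma, and scaling, with the $H^n$--$L^\infty$ bound correctly read under $n>d/2$. The only cosmetic difference is that Brenner--Scott rescale $K$ by a dilation and track the chunkiness parameter instead of pulling back to a reference simplex, which is equivalent for shape-regular simplicial meshes.
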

We will also exploit the following approximation result in the error analysis:
\begin{eq} \label{est interpolant dGstar}
	\dgstarseminorm{\phi - \Ih \phi } \lesssim h^{n-1}\|\phi\|_{H^{n}({\Om})}, \quad \phi \in H^n(\Omega), \quad  2 \leq n \leq q+1,
\end{eq}
which follows by combining Lemma~\ref{lem:  local interpolant properties} with the continuous trace inequality. \\

\noindent \textbf{Idea of the proof of Theorem~\ref{thm: main}}. The proof of Theorem~\ref{thm: main} contains several steps, following the general well-posedness and error analysis framework put forward in~\cite{hochbruck22, maier2020error} for quasilinear wave-type problems; we refer also to~\cite{dorich2025strong, dorich2024robust, dewit2026} for its subsequent use in different contexts.  \\
\indent To establish existence, we rewrite \eqref{semi-discrete jmgt} as a system of nonlinear first-order differential equations with a locally Lipschitz continuous right-hand side and such that 
\[
(\uh(0),\uht(0), \uhtt(0)) \in \openset,
\]
where $\openset$ is a suitably defined open set. We will then show that there exists large enough $C_{0}>0$, independent of $h$ and $\tau$, such that
\begin{align}\label{eq: def thh}
	\begin{split}
		\thh \coloneqq  \sup \,  \Bigl \{  t \in (0,T] \mid & \textup{  a unique solution } 
		\uh \in C^{3}([0,t];\Vhqp)\ \textup{of \eqref{semi-discrete jmgt} exists and }  \\ &  
		\tnorm{\uh(t)-\Ih u(t)} \leq \Czero h^{d/2} \Bigr \}
	\end{split}
\end{align}
is positive, where $\tnorm{\cdot}$ is defined in \eqref{def calE}.  The central and most challenging component of the proof is to then establish error bounds on the interval $[0,\thh]$ that do not depend on $\thh$. This will allow us to show that 
\[
(\uh(\thh),\uht(\thh), \uhtt(\thh)) \in \openset
\]
and conclude that the solution exists beyond $\thh$ and thus $\thh = T$. 
\subsection*{Step I: Existence with a discretization-dependent horizon}  The general outline of the arguments for showing existence until a possibly $h$-dependent final time are analogous to those of~\cite[Section 3.1]{dewit2026}, where the dG semi-discretization of the limiting Westervelt equation is studied with absorbing boundary conditions. We begin by proving that the approximate initial conditions belong to an open set on which the semi-discrete problem does not degenerate.
\begin{lemma}\label{lem: p0 in D}
	Under the assumptions of Theorem \ref{thm: main} and with $r_{0} \in (0, 1-r)$ a fixed constant, the approximate initial data $(\uzeroh, \uoneh, \utwoh)$ lie in  $\openset$, where
	\begin{align} \label{def openset}
		\openset = \{ (\psih, \phih, \qh)^T \in (\Vhq)^{3} \mid \| k \psi_h \|_{L^{\infty}(\Omega)} < r+r_{0}<1 \},
	\end{align}
\end{lemma}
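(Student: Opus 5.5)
The claim is that $\|k\uzeroh\|_{L^\infty(\Omega)} < r + r_0$, since the membership condition in $\openset$ constrains only the first component. The components $\uoneh$ and $\utwoh$ automatically lie in $\Vhq$ because $\Ih$ maps into $\Vhq$. I will therefore only control $k\,\Ih\uzero$ in $L^\infty$, working element by element through the definition \eqref{def interpolant} of $\Ih$.

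Write
\[
k\,\Ih\uzero = k\uzero + k(\Ih\uzero - \uzero).
\]
The first term is handled by the non-degeneracy assumption \eqref{non-degeneracy} at $t=0$. Since $u\in\Xu\subset C([0,T];H^{q+1}(\Omega))$ and $u(0)=\uzero$, we have $\|k\uzero\|_{\Linf}\le \|ku\|_{C([0,T];\Linf)} < r$. For the second term, on each $K\in\triangles$ I apply Lemma~\ref{lem: local interpolant properties} in the form
\[
|\phi-\locainterpolant\phi|_{L^\infty(K)}\lesssim h^{n-d/2}|\phi|_{H^n(K)}
\]
with $n=q+1$. This is admissible since $q+1-d/2>0$ for $q\ge d/2$, and $\uzero\in H^{q+1}(\Omega)$ with $\|\uzero\|_{H^{q+1}}\lesssim\|u\|_{\Xu}$. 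Combining the two terms gives
\[
\|k\uzeroh\|_{\Linf}\le \|k\uzero\|_{\Linf}+\|k\|_{\Linf}\,C h^{q+1-d/2}\|u\|_{\Xu}.
\]

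The only subtle point is that the second term must be made smaller than the gap $r+r_0-\|k\uzero\|_{\Linf}$, which is at least $r_0>0$, and this must be done without assuming $h$ small. Because $q+1-d/2\ge 1$ and $h\le \mathrm{diam}(\Omega)$, the factor $h^{q+1-d/2}$ is bounded by a constant independent of $h$. I will therefore absorb it using the smallness $\|k\|_{\Linf}\le\bark$: it suffices to choose $\bark$ with
\[
\bark\, C\,\mathrm{diam}(\Omega)^{q+1-d/2}\|u\|_{\Xu}<r_0 .
\]
This matches the paper's stated strategy of imposing smallness on $k$ rather than on $h$.

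As a cross-check, and as an alternative if one prefers to avoid the approximation bound, the $L^\infty$ stability $\|\locainterpolant\phi\|_{L^\infty(K)}\lesssim\|\phi\|_{L^\infty(K)}$ does not suffice on its own. It loses the sharp constant needed to stay below $r+r_0$, which is why the splitting into $k\uzero$ plus an interpolation error is necessary. With that splitting, the strict inequality $\|k\uzeroh\|_{\Linf}<r+r_0<1$ follows, and hence $(\uzeroh,\uoneh,\utwoh)\in\openset$.
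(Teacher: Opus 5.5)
Your proposal is correct and is essentially the paper's proof. You use the same splitting $k\uzeroh = k\uzero + k(\Ih\uzero-\uzero)$, the $L^\infty$ interpolation estimate \eqref{est interpolant approx Linf} with $n=q+1$, and smallness of $\|k\|_{\Linf}$ to push the second term below $r_0$; your bound $h^{q+1-d/2}\le \mathrm{diam}(\Omega)^{q+1-d/2}$ only makes explicit what the paper leaves implicit. One small overstatement in your closing remark: under the smallness assumption on $k$, the crude bound $\|k\|_{\Linf}\|\Ih\uzero\|_{\Linf}\lesssim \bark\,\|\uzero\|_{\Linf}$ from $L^\infty$ stability would also suffice for $\bark$ small, so the splitting is natural but not strictly necessary.
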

\begin{proof}
	Using \eqref{est interpolant approx Linf}, we can estimate  
	\begin{eq}
		\| k \uzeroh\|_{L^{\infty}(\Omega)} = \|k\Ih \uzero \|_{L^{\infty}(\Omega)} 
		&\leq  \| k \uzero \|_{L^{\infty}(\Omega)} +  \Linfnormk \| \Ih \uzero - \uzero \|_{L^{\infty}(\Omega)} \\ 
		&\leq r + C \Linfnormk h^{q+1-d/2}\|\uzero\|_{H^{q+1}(\Omega)},
	\end{eq}
	for some $C>0$, independent of $h$. Provided $\Linfnormk \leq \bark$ is small enough, we then have $ \| k \uzeroh \|_{L^{\infty}(\Omega)} < r+r_{0} < 1$. Since the set $\openset$ involves a condition only on the first component, we conclude that $(\uzeroh, \uoneh, \utwoh) \in \openset$.
\end{proof}
We next prove that a semi-discrete solution exists up to final time $\thh$.
\begin{proposition}
	Under the assumptions of Theorem \ref{thm: main}, $\thh>0$.
\end{proposition}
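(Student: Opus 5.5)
We rewrite \eqref{semi-discrete jmgt} as a first-order ODE system in $(\Vhq)^3$ and then argue by Picard--Lindelöf together with continuity.

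\emph{First-order system.} Fix a basis of $\Vhq$. Set $\boldsymbol{y}=(\uh,\uht,\uhtt)$. Expanding $((1+k\uh)\uht)_t = (1+k\uh)\uhtt + k(\uht)^2$, the equation becomes
\[
\tau M\boldsymbol{\ddot{}}\,\uhtt + M_{\Gabs}(\tau\uhtt)\alpha + \bigl(M_{1+k\uh}\uhtt\bigr) = F(t,\uh,\uht),
\]
where $F$ collects the remaining terms: $-k(\uht)^2$, $-\ah(\csq\uh+\btau\uht,\cdot)$, $-\alpha(\uht,\cdot)_{\LtwoGabs}$, and the data.

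The leading operator is $\tau$ times the mass matrix $M$. Since $\tau>0$ is fixed and $M$ is symmetric positive definite, we can solve for $\uhttt$ explicitly. The system then reads $\boldsymbol{y}'=\Phi(t,\boldsymbol{y})$ with
\[
\Phi(t,\boldsymbol{y}) = \Bigl(\uht,\ \uhtt,\ \tau^{-1}M^{-1}\bigl[F - ((1+k\uh)\uhtt,\cdot) - \alpha(\tau\uhtt,\cdot)_{\LtwoGabs}\bigr]\Bigr).
\]
The map $\Phi$ is a polynomial in the coefficients of $\boldsymbol{y}$, because all the nonlinear terms are quadratic products integrated against basis functions. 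It is also continuous in $t$, since $\fh\in C([0,T];\Vh)$ and $\ghabs,\ghN\in H^1(0,T;\cdot)\subset C([0,T];\cdot)$. Hence $\Phi$ is locally Lipschitz in $\boldsymbol{y}$, uniformly in $t$. On $\openset$ we even have the nondegeneracy $1+k\uh>1-r-r_0>0$. This is not needed for solvability at fixed $\tau>0$, but it matches the framework of \cite{dewit2026}.

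\emph{Local existence and the error condition.} By Lemma~\ref{lem: p0 in D}, the initial datum lies in the open set $\openset$. Picard--Lindelöf then gives $t_1>0$ and a unique $C^1$ solution $\boldsymbol{y}$ on $[0,t_1]$ that stays in $\openset$. It follows that $\uh\in C^3([0,t_1];\Vhq)$, since $\uhttt=\Phi_3$ is continuous. Uniqueness follows from the local Lipschitz property.

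Next, the map $t\mapsto\tnorm{\uh(t)-\Ih u(t)}$ is continuous. This uses $u\in\Xu$, so that $\Ih u\in C^2$ in time with values in $\Vh$, and the time integrals in \eqref{def calE} are continuous. At $t=0$ the pointwise terms vanish by the choice of initial data, and the integral terms are zero. Thus the error is $0<\Czero h^{d/2}$ at $t=0$. By continuity, the strict inequality persists on some $[0,t_2]$ with $t_2>0$. Therefore $\thh\ge\min(t_1,t_2)>0$.

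The only point needing care is the continuity of the energy norm at $t=0$ and the smoothness of $\Ih u$ in time. For the latter, linearity of $\Ih$ and $u\in W^{2,\infty}(0,T;H^{q+1})\cap H^3(0,T;H^q)$ give $\Ih u\in C^2([0,T];\Vh)$, since $H^q$ with $q\ge d/2$ embeds suitably for the interpolant. If needed, one handles the borderline embedding $q=d/2$ through the $H^3(0,T;H^q)$ regularity, which yields continuity of $\utt$ in $H^q$, and finite-dimensional norm equivalence on $\Vh$.
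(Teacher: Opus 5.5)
Your proposal is correct and follows the same route as the paper. You rewrite \eqref{semi-discrete jmgt} as a first-order system in $(\Vhq)^3$, apply the local Picard--Lindel\"of theorem from data in $\openset$, and use continuity of $t\mapsto\tnorm{\uh(t)-\Ih u(t)}$, which vanishes at $t=0$. Your explicit solve for $\uhttt$ through $\tau^{-1}M^{-1}$ is in fact the accurate form of the system: the paper's displayed $F$ applies $\Lambdah^{-1}(\uh)$ as in the Westervelt case, and you correctly note that nondegeneracy is not needed for local solvability at fixed $\tau>0$.

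One justification needs fixing: your argument for the time-continuity of $\Ih\utt$ fails in the borderline case $d=2$, $q=1$. There $H^q(\Omega)$ does not embed into $C(\overline{\Omega})$, and norm equivalence on $\Vhq$ cannot make the nodal interpolant bounded on $H^q(\Omega)$. Instead, use
$\utt\in L^\infty(0,T;H^{q+1}(\Omega))\cap C([0,T];H^q(\Omega))\subset C_w([0,T];H^{q+1}(\Omega))$
together with continuity of point evaluations on $H^{q+1}(\Omega)$; this gives $\Ih\utt\in C([0,T];\Vhq)$.
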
\vspace*{-4mm}
\begin{proof}    
	To show existence of semi-discrete solutions we write the semi-discrete acoustic problem as a first-order system of ODEs. Following, e.g.,~\cite[Sec.~3.1]{dewit2026}, we introduce the discrete multiplication operator $\Lambdah$ as 
	\begin{eq}
		\left(\Lambdah(\uh) \psih, \phih\right)_{\Ltwo} =  \intO (1+k \uh) \psih \phih \dx
	\end{eq}
	for all $(\psih, \phih) \in  \Vh \times \Vh$.  By Lemma \ref{lem: p0 in D}, we have $ \| k \uh(0)\|_{L^{\infty}(\Omega)} < 1$, which implies that the operator $\Lambda_{h}$ is locally invertible at $t=0$. \\
	\indent We also define the discrete differential operator $\Ahsip: \Vhqp \rightarrow \Vhqp$, such that 
	\begin{eq}
		\left(\Ahsip \psih, \phih\right)_{\Ltwo} = \asip(\psih, \phih),
	\end{eq}
	for all $(\psih, \phih) \in  \Vh \times \Vh$. The properties of $\Ahsip$ 
	follow from the consistency, discrete coercivity, and boundedness of  $\asip$; cf.~\cite[Lemma 4.71]{di2011mathematical}. Furthermore, we introduce the extension operators$\Gh_{\textup{abs}}: \VGhabs \rightarrow \Vhq$ and $\Gh_{\textup{N}}: \VGhN \rightarrow \Vhq$ for the boundary integral terms: 
	\begin{eq}
		(\Gh_{\textup{abs}} \psih, \phih)_{L^2(\Omega)} = \int_{\Gabs} \psih \phih \dG, \quad 	(\Gh_{\textup{N}} \psih, \phih)_{L^2(\Omega)} = \int_{\GN} \psih \phih \dG
	\end{eq}
	for all $\psih \in \VGh, \; \phih \in \Vh$. 
	We can then rewrite the semi-discrete acoustic problem as the first-order system of ODEs:
	\begin{align}\label{eq: nonlin system of odes}
		\begin{bmatrix} \uh_{t} \\ \ph_t \\ \qh_{t}  \end{bmatrix} = \begin{bmatrix}  \ph\\ \qh \\ F(\uh, \ph, \qh) \end{bmatrix}, \qquad 	\begin{bmatrix} \uh \\ \ph \\ \qh \end{bmatrix}(0) = \begin{bmatrix} \Ih \uzero \\ \Ih \uone \\ \Ih \utwo \end{bmatrix},
	\end{align}
	where the function $F$ is defined as
	\begin{eq}
		F(\uh, \ph, \qh)=&\, \begin{multlined}[t] \Lambda_{h}^{-1}(\uh)\left( \fh -  \pi_{h}(k (\ph)^{2}) - \csq \Ahsip \uh - \btau \Ahsip \ph \right. \\ \left. \hspace*{2cm}-\alpha \Gh_{\textup{abs}}( {\ph_{\vert \Ga}})-\alpha \tau \Gh_{\textup{abs}}( {\qh_{\vert \Ga}} ) + \Gh_{\textup{abs}} \ghabs +\Gh_{\textup{N}} \ghN \right),
		\end{multlined}
	\end{eq}
	and $\pi_{h}$ is the $L^{2}(\Omega)$ projection onto $\Vhqp$.   \\
	\indent To apply the local version of the Picard--Lindel\"of theorem  to \eqref{eq: nonlin system of odes}, we introduce a ball centered at the discrete initial data and contained in $\openset$: let $\bar{\rho}>0$ be small enough so that 
	\[
	\ball ((\uzeroh, \uoneh, \utwoh)^{T}; \bar{\rho})=\left \{(\psih, \phih,  \qh)^T \in (\Vhq)^3:  \|(\psih-\uzeroh, \phih-\uoneh, \qh-\utwoh)\|_{\Linf} < \bar{\rho} \right\}
	\]
	satisfies
	\[
	\closedball ((\uzeroh, \uoneh, \utwoh)^{T}; \bar{\rho}) \subset \openset.
	\]
	Since $\|k \psih \|_{L^{\infty}(\Omega)} < r + r_0 < 1$ for all $(\psih, \phih, \qh)^T \in \ball$,  the operator $\Lambdah$ is invertible on $\ball$.
	\\
	\indent	We have established that the approximate initial conditions belong to $\openset$ in Lemma~\ref{lem: p0 in D}. The local Lipschitz continuity and boundedness of $F$ in $\closedball$ can be shown by using the fact that $\Vhq$ is finite-dimensional, and thus all norms are equivalent, analogously to the arguments in, for example,~\cite[Lemma 3.3]{hochbruck22}. Therefore, the local version of the Picard--Lindelöf theorem (see~\cite[Problem 7.1.3]{picardlindelofp105}) yields the local existence of a unique solution $\uh \in C^3([0,\tilde{t}];\Vh)$ for some $\tilde{t}>0$ with $(\uh{(t)},\uht{(t)}, \uhtt(t)) \in 
	\closedball \subseteq \openset$ for $t \in [0,\tilde{t}]$. \\
	\indent Furthermore, since $\tnorm{\uzeroh- \Ih \uzero} =0 \leq C_0 h^{d/2}$, the continuity in time of the local solution and its time derivatives implies that up to a possibly $h$-dependent final time $0<\thh\leq \tilde{t}$, the solution satisfies 
	\begin{eq}
		\tnorm{\uh(t)- \Ih u(t)} \leq C_0 h^{d/2}, \qquad t \in [0, \thh].
	\end{eq}
	Therefore, we have $\thh > 0$, which concludes the proof.
\end{proof}
\subsection*{Step II: Establishing uniform bounds}
The next task is crucial: we need to establish the $h$-uniform error bounds on $[0,\thh]$. In addition, the bounds should be uniform in $\tau$ to be able to also analyze the $\tau \rightarrow 0$ limiting behavior of semi-discrete solutions. 

To set up the arguments, we first split the total approximation error into a discrete 
part $\ehu = \uh - \Ih u$ and an interpolation part $\eIu = \Ih u - u$, so that
\[
\uh-u = (\uh-\Ih u)+ (\Ih u -u) = \ehu + \eIu.
\]
By subtracting the problems solved by $\uh$ and $u$, and splitting the error, we see that the discrete pressure error $\ehu$ satisfies
\begin{eq}\label{error eq}
	&\begin{multlined}[t] \intO \tau \ehu_{ttt} \phih \dx+	\intO (1+k \uh)\ehu_{tt} \phih \dx		+ \csq\asip(\ehu+\tau \ehu_t, \phih) \\
		 + \intO k ((\uht+\ut) \ehu_{t}+ \utt \ehu) \phih \dx 
		 + \int_{\Gabs} \alpha (\ehu_t+\tau \ehu_{tt}) \phih \dG 
		=\,  - \intO \deltahp \phih \dx 		
	\end{multlined}
\end{eq}
for all $\phih \in \Vh$, $t \in [0,T]$. Here we have introduced the defect $\deltah$, which satisfies
	\begin{eq}\label{defect 1}
	\begin{multlined}[t] \intO \deltah \phih \dx  =\tau \intO \eI_{ttt} \phih \dx+ \intO  (1+k \uh) \eIu_{tt} \phih\dx+ \intO k   \utt \eIu  \phih \dx \\
		+ \intO \left( k \uh_{t}\eIu_{t} + k  \ut \eIu_{t} \right) \phih \dx 
		+ \csq\asip(\eIu+\tau \eIu_t,\phih) + \int_{\Gabs} \alpha (\eIu_t + \tau \eIu_{tt})\phih \dG \\+ \intO (f-\fh)\phih \dx + \intGabs (\gabs-\ghabs) \phih \dG+ \intGN (\gN-\ghN) \phih \dG.
	\end{multlined}
\end{eq}
To estimate $\ehu$, we thus need a bound on the term involving the defect; the following uniform bound on $\uht$ will be useful for this purpose.

\begin{lemma} \label{lem: unif bound uht}
	Under the assumptions of Theorem \ref{thm: main}, we have
$
		\|k \uht\|_{L^\infty(0, \thh; \Linf)} \lesssim 1.
$
\end{lemma}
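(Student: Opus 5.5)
We bound $\|k\uht\|_{L^\infty(\Omega)}$ by splitting $\uht$ as $\ut + \eIu_t + \ehu_t$ and controlling each piece separately. The first two pieces are handled by regularity of the exact solution. The third piece uses the bootstrap bound built into the definition \eqref{eq: def thh} of $\thh$, combined with an inverse inequality.

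For $t\in[0,\thh]$ we write
\[
\|k\uht(t)\|_{\Linf} \leq \Linfnormk\left(\|\ut(t)\|_{\Linf} + \|\eIu_t(t)\|_{\Linf} + \|\ehu_t(t)\|_{\Linf}\right).
\]

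\textbf{Exact solution.} Since $u\in\Xu \hookrightarrow W^{1,\infty}(0,T;H^{q+1}(\Omega))$ and $q+1>d/2$, the Sobolev embedding $H^{q+1}(\Omega)\hookrightarrow\Linf$ gives $\|\ut\|_{L^\infty(0,T;\Linf)}\lesssim \|u\|_{\Xu}$.

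\textbf{Interpolation error.} Apply the $L^\infty$ estimate of Lemma~\ref{lem: local interpolant properties} elementwise with $n=q+1$. This gives
\[
\|\eIu_t\|_{\Linf}\lesssim h^{q+1-d/2}\|\ut\|_{H^{q+1}(\Omega)}\lesssim 1 .
\]
Alternatively, the $L^\infty$-stability of $\locainterpolant$ gives $\|\Ih\ut\|_{\Linf}\lesssim\|\ut\|_{\Linf}$ directly.

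\textbf{Discrete error.} This is the step that actually uses $t\le\thh$, and it is where the restriction $q\ge d/2$ enters. Apply the inverse inequality \eqref{inverse inequality} with $\ell=\infty$, $\ell'=2$ on each $K$, together with quasi-uniformity ($\hK\gtrsim h$):
\[
\|\ehu_t(t)\|_{\Linf}=\max_K\|\ehu_t\|_{L^\infty(K)}\leq \invconstant h^{-d/2}\|\ehu_t(t)\|_{\Ltwo}.
\]
By definition of $\tnorm{\cdot}$ we have $\|\ehu_t(t)\|_{\Ltwo}\le\tnorm{\uh(t)-\Ih u(t)}$. On $[0,\thh]$ this quantity is at most $\Czero h^{d/2}$. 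Hence $\|\ehu_t(t)\|_{\Linf}\le \invconstant\Czero$.

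\textbf{Conclusion and main difficulty.} Combining the three bounds gives $\|k\uht\|_{L^\infty(0,\thh;\Linf)}\lesssim \Linfnormk(1+\Czero)\lesssim 1$. The only subtlety is that the resulting constant depends on $\Czero$. Later, $\Czero$ must be chosen independently of $h$ and $\tau$, and the smallness of $\Linfnormk\le\bark$ may have to absorb this dependence. I would therefore record the explicit form $\|k\uht\|_{L^\infty(0,\thh;\Linf)}\le \bark\,C(1+\Czero)$, so that a circular dependence on $\Czero$ can be avoided when closing the bootstrap argument.
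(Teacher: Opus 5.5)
Your proof is correct and follows the paper's route. Like the paper, you split off the discrete error $\uht-\Ih\ut$, bound it with the inverse inequality, control the interpolant in $L^\infty$, and use that $\|k\|_{\Linf}$ is small. Your explicit use of the bootstrap bound $\tnorm{\ehu(t)}\le \Czero h^{d/2}$ on $[0,\thh]$ is the right justification for the discrete part; the paper's displayed factor $h^{q-d/2}\|\ut\|_{H^q(\Omega)}$ tacitly presupposes the $\calO(h^q)$ error bound, which is proved only afterwards and itself uses this lemma.

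One small correction: $q\ge d/2$ is not used in this step, since the threshold $\Czero h^{d/2}$ exactly cancels the factor $h^{-d/2}$ from the inverse inequality. That assumption enters only when closing the bootstrap, where one needs $C h^{q}\le \Czero h^{d/2}$.
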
\vspace*{-3mm}
\begin{proof}
	By the inverse inequality \eqref{inverse inequality} and the stability of the interpolant in $\Linf$, we have
	\begin{eq}
		\|k \uht \|_{L^\infty(0, \thh; \Linf)} 
		\leq&\, \Linfnormk \left( \|\uht-\Ih \ut \|_{L^\infty(0, \thh; \Linf)} + \|\Ih \ut\|_{L^\infty(0, \thh; \Linf)} \right) \\
		\lesssim&\, \Linfnormk \left(h^{q-d/2} \|\ut \|_{L^\infty(0, \thh; H^{q}(\Omega))}+ \| \ut\|_{L^\infty(0, \thh; \Linf)} \right).
	\end{eq}
Since $\|k\|_{L^\infty(\Omega)} \leq \bark$ is sufficiently small, we can guarantee that $\|k\uht\|_{L^\infty(0,\thh;L^\infty)} \lesssim 1$,  provided $\bark$ is chosen small enough relative to $\|u\|_{\Xu}$.
\end{proof}
We next tackle the estimate of the defect term with the concrete choice of the test function that will be used in the energy analysis, namely $\phih = \eh_{tt}$. This choice is crucial as it yields a bound uniform in both $h$ and $\tau$.
\begin{lemma} \label{lemma: defect}
	Under the assumptions of Theorem~\ref{thm: main}, for any $\eps>0$, we have the following bound:
	\begin{eq}
		\left| \intt \intO \deltah \ehutt \dxs \right|  \lesssim&\, \begin{multlined}[t] h^{2\qp}
			+	 \eps \dgseminorm{\ehut(t)}^2 + \eps \|\ehut(t)\|^2_{\LtwoGabs}\\
			+ \eps \intt \left( \|\ehutt(s)\|^2_{\Ltwo}+\dgseminorm{\ehut(s)}^2 +  \|\ehut(s)\|_{\LtwoGabs}^2 \right)\ds,
		\end{multlined}
	\end{eq}
	where the hidden constant depends on $\|u\|_{\Xu}$ and $T$, but not on $h$ or $\tau$. \vspace*{-3mm}
\end{lemma}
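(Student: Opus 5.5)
The plan is to split $\int_0^t\int_\Omega \deltah \ehutt$ according to the terms in \eqref{defect 1}, which fall into two groups. \emph{Volume terms} can be absorbed directly against $\eps\intt\|\ehutt\|^2_{\Ltwo}$. \emph{Bilinear-form and boundary terms} cannot be paired with $\ehutt$ uniformly in $h$ and $\tau$, so for those I integrate by parts in time and move the time derivative onto the interpolation or data error.

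\textbf{Volume terms.} For these I use Cauchy--Schwarz and Young's inequality, with bounds on the interpolation error.
\begin{itemize}
\item For $\tau\int_0^t(\eI_{ttt},\ehutt)$ I use $\tau\le\bartau$ and $\|\eI_{ttt}\|_{L^2(0,T;\Ltwo)}\lesssim h^q\|u\|_{H^3(0,T;H^q)}$ from Lemma~\ref{lem: local interpolant properties}.
\item For $\int_0^t((1+k\uh)\eI_{tt},\ehutt)$ I use $\|k\uh\|_{\Linf}<r+r_0<1$ on $[0,\thh]$.
\item For the terms $k\utt\eI$ and $k(\uht+\ut)\eI_t$ I use Lemma~\ref{lem: unif bound uht} together with $u\in\Xu\hookrightarrow W^{2,\infty}(0,T;L^\infty)$, which holds since $q\ge d/2$ (if this embedding fails at the endpoint, I would instead use $H^{q+1}\hookrightarrow L^\infty$).
\item For $(f-\fh,\ehutt)$ I use \eqref{approx properties source terms}.
\end{itemize}
Each of these contributes $h^{2q}+\eps\intt\|\ehutt\|^2_{\Ltwo}\ds$.

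\textbf{Bilinear-form term.} I write $c^2\asip(\eI+\tau\eI_t,\ehutt)$ (with the appropriate $\btau$ weight) and integrate by parts in time:
\[
\intt\asip(w,\ehutt)\ds=\asip(w(t),\ehut(t))-\asip(w(0),\ehut(0))-\intt\asip(w_t,\ehut)\ds .
\]
The initial term vanishes because $\ehut(0)=\Ih\uone-\Ih\uone=0$. I then apply the boundedness \eqref{boundedness dGstar} together with \eqref{est interpolant dGstar} for $n=q+1$. This yields $\dgstarseminorm{\eI(t)}+\dgstarseminorm{\eI_t(t)}\lesssim h^q$ pointwise in time, using $W^{2,\infty}(0,T;H^{q+1})$. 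Similarly, $\dgstarseminorm{\eI_t}+\tau\dgstarseminorm{\eI_{tt}}\lesssim h^q$ holds in $L^2(0,T)$. Young's inequality then gives $h^{2q}+\eps\dgseminorm{\ehut(t)}^2+\eps\intt\dgseminorm{\ehut}^2\ds$. This step is where $\tau$-uniformity matters: $\tau\eI_t$ only ever appears with $\tau$ bounded.

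\textbf{Absorbing-boundary terms.} I treat $\alpha(\eI_t+\tau\eI_{tt},\ehutt)_{\LtwoGabs}$ and $(\gabs-\ghabs,\ehutt)_{\LtwoGabs}$ the same way, integrating by parts in time onto $\ehut$. For $\eI_t$, $\eI_{tt}$ on $\Gabs$ I use the continuous trace inequality (Lemma~\ref{lem: cont trace ineq}) with Lemma~\ref{lem: local interpolant properties}, which gives $\|\eI_{t}\|_{\LtwoGabs}+\|\eI_{tt}\|_{\LtwoGabs}\lesssim h^{q+1/2}$ pointwise in time. The time-integrated $\tau\eI_{ttt}$ trace is controlled by $H^3(0,T;H^q)$ and the trace inequality; at worst this gives $h^{q-1/2}$, but since it enters only in $L^2$ in time I expect $h^q$ to suffice after pairing with $\ehut$. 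For the data error I use the $H^1(0,T;\LtwoGabs)$ accuracy in \eqref{approx properties source terms}. Altogether this produces $h^{2q}+\eps\|\ehut(t)\|^2_{\LtwoGabs}+\eps\intt\|\ehut\|^2_{\LtwoGabs}\ds$.

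\textbf{Neumann term (main obstacle).} This is the term $(\gN-\ghN,\ehutt)_{\LtwoGN}$, because $\ehut$ on $\GN$ is not part of the energy. I again integrate by parts in time. I then apply the discrete trace inequality \eqref{face element trace ineq}, $\|\ehut\|_{\LtwoGN}\lesssim h^{-1/2}\|\ehut\|_{\Ltwo}$, and use the higher accuracy $h^{q+1}$ to absorb the loss. Finally, I control $\|\ehut(s)\|_{\Ltwo}\le\int_0^s\|\ehutt\|_{\Ltwo}$, which holds since $\ehut(0)=0$, so that everything is bounded by $h^{2q}+\eps\intt\|\ehutt\|^2_{\Ltwo}\ds$ with constants depending on $T$.

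Collecting all the contributions gives the claimed bound, with constants depending only on $\|u\|_{\Xu}$ and $T$.
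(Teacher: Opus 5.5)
\textbf{Gap: the absorbing-boundary interpolation term.} Most of your plan is sound. The volume terms, the bilinear-form term, and the data terms $g_{\textup{abs}}-g^h_{\textup{abs}}$, $g_{\textup{N}}-g^h_{\textup{N}}$ are handled correctly. For the Neumann term, your use of $e^h_t(0)=0$ to get $\|e^h_t(s)\|_{L^2(\Omega)}\le\int_0^s\|e^h_{tt}\|_{L^2(\Omega)}$ puts the bound exactly in the form stated in the lemma. The problem is the term $\alpha\int_0^t(e^I_t+\tau e^I_{tt},e^h_{tt})_{L^2(\Gamma_{\textup{abs}})}\,\textup{d}s$. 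Integrating it by parts in time produces $-\alpha\tau\int_0^t(e^I_{ttt},e^h_t)_{L^2(\Gamma_{\textup{abs}})}\,\textup{d}s$. The only available regularity for $u_{ttt}$ is $L^2(0,T;H^q(\Omega))$. With it, the continuous trace inequality gives at best $\|e^I_{ttt}\|_{L^2(0,T;L^2(\Gamma_{\textup{abs}}))}\lesssim h^{q-1/2}$, and this cannot be improved for traces of interpolation errors of $H^q$-functions. Pairing with $\|e^h_t\|_{L^2(0,t;L^2(\Gamma_{\textup{abs}}))}$ and applying Young's inequality gives $h^{2q-1}+\varepsilon\int_0^t\|e^h_t\|^2_{L^2(\Gamma_{\textup{abs}})}\,\textup{d}s$, not $h^{2q}$. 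The term entering only in $L^2$ in time does nothing to recover the missing $h^{1/2}$. Nor does $e^h_t$ carry usable smallness in $h$: the bootstrap bound $C_0h^{d/2}$ is too weak, and using it would make the constant depend on $C_0$, which is circular. As written, your argument therefore loses half an order, and the claim that $h^q$ will suffice is unjustified.

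\textbf{Fix.} Do not integrate this term by parts; this is what the paper does. Since $u\in H^2(0,T;H^{q+1}(\Omega))$, the continuous trace inequality gives $h_K^{-1/2}\|e^I_t+\tau e^I_{tt}\|_{L^2(F)}\lesssim h^{q}$ locally, in $L^2$ in time. The discrete trace inequality gives $\|e^h_{tt}\|_{L^2(F)}\lesssim h_K^{-1/2}\|e^h_{tt}\|_{L^2(K)}$. The product is of size $h^q\|e^h_{tt}\|_{L^2(K)}$, so Young's inequality yields $h^{2q}+\varepsilon\int_0^t\|e^h_{tt}\|^2_{L^2(\Omega)}\,\textup{d}s$. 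Alternatively, you may integrate only the $\alpha e^I_t$ part by parts, which is harmless since $e^I_{tt}$ has an $h^{q+1/2}$ trace, and pair $\alpha\tau e^I_{tt}$ directly with $e^h_{tt}$ as above. Integration by parts in time is genuinely needed only for the data errors, whose accuracy is measured in $H^1$ in time, and there you already do it correctly.
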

\begin{proof}
	We take $\phih=\eh_{tt}$ in \eqref{defect 1}, integrate the resulting identity over $(0,t)$ and estimate each term on the right-hand side by applying H\"older's inequality, the interpolation estimates of Lemma~\ref{lem: local interpolant properties}, and Young's $\eps$-inequality.  First,
	\begin{eq}
		\tau \inttO \eIu_{ttt} \eh_{tt} \dx \lesssim&\, \bartau \|\eIu_{ttt}\|_{\LtwotLtwo} \|\eh_{tt}\|_{\LtwotLtwo}\\
		\lesssim& \, h^{2q} \|u_{ttt}\|_{L^2(0,T;  H^q(\Omega))} + \eps  \|\eh_{tt}\|_{\LtwotLtwo}^2
	\end{eq}
	for any $\eps>0$. Here the assumption $u \in H^3(0,T; H^q(\Omega))$ is used.
	
	We next estimate the $k$ terms. We have 
	\begin{eq}
		\intt \intO (1+k \uh) \eIu_{tt}  \ehutt \dxs 
		\leq&\, \|1+k \uh\|_{\LinftLinf}\| \eIu_{tt}\|_{\LtwotLtwo}\|\ehutt\|_{\LtwotLtwo}\\
		\lesssim&\,(1+r+r_0)^2 h^{2q}\|\utt\|^2_{L^2(0,T; H^q(\Omega))}+\eps  \|\ehutt\|^2_{\LtwotLtwo} \\
		\lesssim&\,h^{2q}+\eps  \|\ehutt\|^2_{\LtwotLtwo} 	
	\end{eq}
	since $u \in \Xu \hookrightarrow H^2(0,T; H^q(\Omega))$; cf.~\eqref{def Xu}. 
	Next, we estimate
	\begin{eq}
	&  \inttO k \left(\utt \eIu  + \uht \eIu_{t} +\ut \eIu_{t} \right)\ehutt \dxs  \\
	\lesssim&\, \begin{multlined}[t]   \Bigl\{\Linfnormk \|\utt\|_{L^2(0,T; \Linf)}\|\eIu\|_{L^\infty(0,T; \Ltwo)}  +\|k \uht\|_{L^\infty(0,t; \Linf)} \|\eIu_t\|_{\LtwoTLtwo} \\+ \Linfnormk \| \ut\|_{\LinftLinf}\|\eI_t\|_{\LtwoTLtwo}\Bigr\} \|\ehutt\|_{\LtwotLtwo}. 
	\end{multlined}
\end{eq}
	We can rely on the fact that $\|k \uht\|_{L^\infty(0,t; \Linf)}  \lesssim 1$ for $t \in [0, \thh]$ by Lemma~\ref{lem: unif bound uht}. Employing also the fact that $u \in \Xu \hookrightarrow H^2(0,T; \Linf)$ and using the approximation properties of the interpolant, we conclude that
	\begin{eq}
		  \inttO k \left(\utt \eIu  + \uht \eIu_{t} +\ut \eIu_{t} \right)\ehutt \dxs  \lesssim  h^{2q}+\eps  \|\ehutt\|^2_{\LtwotLtwo}.
	\end{eq}
	We next bound the $\ah(\cdot, \cdot)$ term by employing integration by parts in time and then its boundedness stated in \eqref{boundedness dGstar}:
	\begin{eq}
		\intt  \asip( \csq \eIu+\btau \eIu_{t}, \ehutt) \ds =&\, 	  \asip( \csq\eIu(t)+\btau \eIu_{t}(t), \ehut(t)) \ds -	\intt  \asip( \csq \eIu_t+\btau \eIu_{tt}, \ehut) \ds \\
		\lesssim&\,\begin{multlined}[t] 
			\dgstarseminorm{\eIu(t)}^2+ \dgstarseminorm{\eIu_t(t)}^2		 	 + \eps \dgseminorm{\ehut(t)}^2\\+ \intt \dgstarseminorm{\eIu_t(s)}^2 \ds+\intt \dgstarseminorm{\eIu_{tt}(s)}^2 \ds
			+\eps \intt \dgseminorm{\ehut(s)}^2 \ds 
		\end{multlined} \\
		\lesssim&\, h^{2q}	 + \eps \dgseminorm{\ehut(t)}^2+\eps \intt \dgseminorm{\ehut(s)}^2 \ds,
	\end{eq}
	where in the last line we have used the approximation result \eqref{est interpolant dGstar} for the interpolant. 
	To estimate the $\alpha$ term, we employ the continuous trace inequality in Lemma~\ref{lem: cont trace ineq} for $\eIu_t+ \tau \eIu_{tt}$ and the discrete trace inequality \eqref{face element trace ineq} for $\ehutt$:
	\begin{eq}
		&	\intt	\int_{\Gabs} \alpha (\eIu_t+ \tau \eIu_{tt}) \ehutt \dGs\\
		\lesssim&\,\begin{multlined}[t] \sum_{F \in \calFhbndabs}  \intt \left(\|\nabla (\eIu_t+ \tau \eIu_{tt})\|_{\LtwoK}+\hK^{-1}\|  \eIu_t+ \tau \eIu_{tt}\|_{\LtwoK} \right)^{1/2}\\
			\times \|  \eIu_t+ \tau \eIu_{tt}\|^{1/2}_{\LtwoK}\, \hK^{-1/2} \|\ehutt\|_{\LtwoK} \ds.
		\end{multlined}
	\end{eq}
	Then, again by the approximation properties of the interpolant and the fact that $u \in \Xu \hookrightarrow H^2(0,T; H^{q+1}(\Omega))$, we conclude that 
	\begin{eq}
		\intt	\int_{\Gabs} \alpha (\eIu_t+ \tau \eIu_{tt}) \ehut \dGs
		\lesssim&\, h^{2q}+ \eps \|\ehutt\|_{L^2(0,t; \Ltwo)}^2.
	\end{eq}
Using the assumed accuracy of $\fh$ and $\ghabs$ in \eqref{approx properties source terms} and integration by parts in time for the latter term, we infer that
	\begin{eq} \label{est sources}
		&\intt \intO (f-\fh)\ehutt \dxs+   \intt\intGabs (\gabs-\ghabs)\ehutt \dGs \\
		=&\,\intt \intO (f-\fh)\ehutt \dxs +\intGabs (\gabs-\ghabs)(t)\ehut(t) \dG - \intt\intGabs (\gabs-\ghabs)_t \,\ehut \dGs \\
		\lesssim&\, h^{2q}+ \eps \intt \left( \|\ehutt(s)\|_{\Ltwo}^{2}+\|\ehut(s)\|_{\LtwoGabs}^{2}\right) \ds + \eps \|\ehut(t)\|^2_{\LtwoGabs},
	\end{eq}
	where we have also employed the embedding $\gabs-\ghabs \in H^1(0,T; \LtwoG) \hookrightarrow C([0,T]; \LtwoG)$. For the remaining $\gN-\ghN$ term, we proceed similarly by first integrating by parts in time
	\begin{eq} 
		 \intt\intGN (\gN-\ghN)\ehutt \dGs 
		= \intGN (\gN-\ghN)(t)\ehut(t) \dG - \intt\intGN (\gN-\ghN)_t\ehut \dGs, 
	\end{eq}
	but also employ the inverse trace inequality \eqref{inverse inequality}:
		\begin{eq} \label{est ghN}
		&\intGN (\gN-\ghN)(t)\ehut(t) \dG - \intt\intGN (\gN-\ghN)_t\ehut \dGs \\
			\lesssim&\, \begin{multlined}[t]  \sum_{F \in \calFhbndN}  \|\gN(t)-\ghN(t)\|_{L^2(F)} h_K^{-1/2}\|\ehut(t)\|_{\LtwoK}\\[-4mm] \hspace*{1cm}+ \sum_{F \in \calFhbndN} \intt  \|(\gN(s)-\ghN(s))_t\|_{L^2(F)} h_K^{-1/2}\|\ehut(s)\|_{\LtwoK}\ds.
		\end{multlined} 
	\end{eq}
	Then since $\|\gN-\ghN\|_{H^1(0,T; \LtwoGN)} \lesssim h^{q+1}$ by assumption, employing Young's inequality yields
	\begin{eq} 
		 \intt\intGN (\gN-\ghN)\ehutt \dGs 
		\lesssim h^{2q}+ \eps \|\ehut(t)\|^2_{\Ltwo}+ \intt \|\ehut(s)\|_{\Ltwo}^{2} \ds.
	\end{eq}
	Combining all of the above estimates yields the claimed result.
\end{proof}

Equipped with the estimate of the defect term, we are now ready to estimate the discrete error uniformly in $h$ and $\tau$.
\def\Cprop{C_{\textup{Prop}~\ref{prop: a priori est}}}
\begin{proposition} \label{prop: a priori est}
	Under the assumptions of Theorem~\ref{thm: main}, there exists $\Cprop>0$, independent of $h$ and $\tau$, such that
	\begin{eq}
	\tnorm{\ehu(t)}^2 \lesssim h^{2q}, \quad t \in [0, \thh],
	\end{eq}
	where $\tnorm{\cdot}$ is defined in \eqref{def calE}.
\end{proposition}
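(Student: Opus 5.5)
We test the error equation \eqref{error eq} with $\phih=\ehutt$ and integrate over $(0,t)$ for $t\in[0,\thh]$. The $\tau$-term gives $\frac{\tau}{2}\|\ehutt(t)\|^2_{\Ltwo}-\frac{\tau}{2}\|\ehutt(0)\|^2_{\Ltwo}$. The initial values vanish, since $\ehu(0)=\ehut(0)=\ehutt(0)=0$ by the interpolated initial data. The leading term contributes $\int_0^t\!\int_\Omega(1+k\uh)|\ehutt|^2$, which is bounded below by $(1-r-r_0)\|\ehutt\|^2_{\LtwotLtwo}$. Here we use that on $[0,\thh]$ the solution stays in $\openset$. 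To justify this, we bound $\|k(\uh-\Ih u)\|_{\Linf}$ via the inverse inequality and the bootstrap bound $\tnorm{\ehu}\le C_0h^{d/2}$, which gives $\|k\ehu\|_{\Linf}\lesssim \bark\, C_0$, and then take $\bark$ small.

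For the stiffness terms, write $\csq\ehu+\btau\ehut=\csq(\ehu+\tau\ehut)+b\ehut$. Then
\[
\asip(\csq\ehu+\btau\ehut,\ehutt)=\tfrac{b}{2}\tfrac{d}{dt}\asip(\ehut,\ehut)+\csq\tfrac{d}{dt}\asip(\ehu,\ehut)-\csq\asip(\ehut,\ehut)+\tfrac{\csq\tau}{2}\tfrac{d}{dt}\asip(\ehut,\ehut).
\]
By coercivity \eqref{coercivity}, the $b$ part yields $\dgseminorm{\ehut(t)}^2$, and the $\tau$ part is nonnegative. The cross term $\csq\asip(\ehu(t),\ehut(t))$ is absorbed via boundedness \eqref{boundedness}, Young's inequality, and $\dgseminorm{\ehu(t)}^2\le t\int_0^t\dgseminorm{\ehut}^2$ (since $\ehu(0)=0$). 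The remaining $-\csq\int_0^t\dgseminorm{\ehut}^2$ term is a \Gronwall\ term. In the same way, the boundary term gives $\frac{\alpha}{2}\|\ehut(t)\|^2_{\LtwoGabs}+\alpha\tau\int_0^t\|\ehutt\|^2_{\LtwoGabs}$. The $\Ltwo$ norms of $\ehu,\ehut$ and $\dgseminorm{\ehu}$ in $\tnorm{\cdot}$ follow from $\ehut(t)=\int_0^t\ehutt$ and $\ehu(t)=\int_0^t\ehut$ together with Cauchy--Schwarz.

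The nonlinear lower-order terms $\int k((\uht+\ut)\ehut+\utt\ehu)\ehutt$ are handled by Lemma~\ref{lem: unif bound uht} together with $u\in\Xu$, giving $\|k(\uht+\ut)\|_{\Linf}\lesssim1$ and $\|k\utt\|_{\Linf}\lesssim1$. Young's inequality then gives $\eps\|\ehutt\|^2_{\LtwotLtwo}+C\int_0^t(\|\ehut\|^2+\|\ehu\|^2)$. The defect is bounded by Lemma~\ref{lemma: defect}. Its $\eps$-terms at time $t$ are absorbed into the left-hand side, and its integrated terms are \Gronwall\ terms. Altogether we obtain $\tnorm{\ehu(t)}^2\lesssim h^{2q}+\int_0^t\tnorm{\ehu(s)}^2\ds$. \Gronwall's inequality then gives the claim with constants independent of $h$, $\tau$ and $\thh$.

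The main obstacle is $\tau$-uniformity. No estimate may divide by $\tau$, so control of $\ehutt$ must come from the non-degenerate $(1+k\uh)$ term (yielding $\int\|\ehutt\|^2$) and not from the $\tau\|\ehutt\|^2$ term. Hence every term involving $\ehutt$ must be paid with $\eps\|\ehutt\|^2_{\LtwotLtwo}$ and never pointwise in time. This is exactly why Lemma~\ref{lemma: defect} integrates by parts in time in the stiffness and $g$ terms, and our plan relies on that structure. A second subtle point is that the constants hidden in Lemma~\ref{lem: unif bound uht} and in the non-degeneracy bound must not depend on $C_0$. We handle this by choosing $\bark$ after $C_0$ is fixed, via $C_0\bark\lesssim r_0$.
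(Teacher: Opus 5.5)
Your proposal is correct and follows essentially the same route as the paper. The shared steps are: test the error equation with $\ehutt$, integrate the $\csq\asip(\ehu,\ehutt)$ term by parts in time, obtain $\tau$-independent control of $\int_0^t\|\ehutt(s)\|^2_{\Ltwo}\ds$ from coercivity and the non-degeneracy $1+k\uh\ge 1-(r+r_0)$, invoke Lemma~\ref{lemma: defect}, and close with \Gronwall's inequality. Your justification of non-degeneracy and of Lemma~\ref{lem: unif bound uht} is slightly more careful than the paper's: you use the bootstrap bound $\tnorm{\ehu}\le \Czero h^{d/2}$ with the inverse inequality, choosing $\bark$ after $\Czero$, whereas the paper handles this implicitly.
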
\vspace*{-4mm}
\begin{proof}	
	The proof is unlocked through using the right test function, namely, $\phih=   \ehu_{tt}(s)$. We test  \eqref{error eq}  with $\phih=   \ehu_{tt}(s) \in \Vh$, integrate over $s \in (0,t)$, and integrate by parts in time to arrive at
	\begin{eq}\label{tested1}
		\begin{multlined}[t] 
			\frac{\tau}{2}\| \ehu_{tt}(t)\|^2_{\Ltwo}+  \intt \|\sqrt{1+k \uh}\ehu_{tt}\|^2_{\Ltwo} \ds 
			+\frac{\btau }{2} | \ehu_t(t) |^2_{\dGnorm} \\+ \frac{\alpha}{2} \|\ehu_t(t)\|^2_{\LtwoGabs} + \alpha \tau \intt \|\ehu_{tt}(s)\|^2_{\LtwoGabs}\ds
		\leq   - \csq \intt \ah(\ehu,  \ehu_{tt})\ds\\- \inttO \deltah  \ehu_{tt} \dxs  -  	 \inttO k ((\uht+\ut) \ehu_{t}+ \utt \ehu) \ehu_{tt} \dxs ,
		\end{multlined}
	\end{eq}
	where we have also exploited the coercivity of $\ah(\cdot, \cdot)$ stated in \eqref{coercivity}.   
	
	To estimate the first term on the right-hand side of  \eqref{tested1}, we integrate by parts in time and call upon the boundedness of $\ah(\cdot, \cdot)$:
	\begin{eq}
		\left|	- \csq \intt \ah(\ehu,  \ehu_{tt})\ds \right| =&\, \left|- \csq  \ah(\ehu(t),  \ehu_{t}(t))+ \csq \intt \ah(\ehu_t,  \ehu_{t})\ds \right| \\
		\lesssim&\,	|\ehu(t)|^2_{\dGnorm}+  \eps |\ehu_t(t)|^2_{\dGnorm} +  \intt  |\ehu_t(s)|^2_{\dGnorm} \ds\\
		\lesssim&\,\eps |\ehu_t(t)|^2_{\dGnorm} +  \intt  |\ehu_t(s)|^2_{\dGnorm} \ds,
	\end{eq}
	for any $\eps>0$, where we have also employed the fact that
	\begin{eq}
		|\ehu(t)|^2_{\dGnorm} = 	\left	| \int_0^t \ehu_t(s) \ds \right|^2_{\dGnorm}   \lesssim T \intt  |\ehu_t(s)|^2_{\dGnorm} \ds.
	\end{eq}
	Lemma \ref{lemma: defect} provides us with the estimate of the defect term on the right-hand side of \eqref{tested1}. 
	Next, since $u \in \Xu$ and $\|k \uht\|_{L^2(0,\thh; \Linf)} \lesssim 1$ by Lemma~\ref{lem: unif bound uht}, we have
	\begin{equation}
		- \inttO k ((\uht+\ut) \ehu_{t}+ \utt \ehu) \ehu_{tt} \dxs \lesssim \|\ehut\|^2_{L^2(0,t; \Ltwo) }+ \eps \intt \|\ehutt(s)\|^2_{\Ltwo} \ds,
	\end{equation}
	where we used
		\begin{eq}
		\|\ehu\|^2_{\LtwotLtwo}  \lesssim T ^2 	\|\ehu_t\|^2_{\LtwotLtwo}.
	\end{eq}
	Altogether, starting from \eqref{tested1} and employing these bounds together with Lemma~\ref{lemma: defect}, we obtain
	\begin{eq}\label{tested11}
		&\begin{multlined}[t] 
			\frac{\tau}{2}\| \ehu_{tt}(t)\|^2_{\Ltwo}+ 		\|\ehu_t(t)\|^2_{\Ltwo} +	\intt \|\ehu_{tt}(s)\|^2_{\Ltwo} \ds 
			+\frac{\btau}{2} | \ehu_t(t) |^2_{\dGnorm} \\+ \frac{\alpha  }{2} \|\ehu_t(t)\|^2_{\LtwoGabs}  + \alpha \tau \intt \|\ehu_{tt}(s)\|^2_{\LtwoGabs}\ds
		\end{multlined}
		\\
		\lesssim &\,  \begin{multlined}[t]
			h^{2q}  + \eps \dgseminorm{\ehut(t)}^2		+ \eps \|\ehu_t(t)\|^2_{\LtwoGabs} +  \eps \intt  \|\ehutt(s)\|_{\Ltwo}^2\ds \\
			+\int_{0}^{t}\left( \dgseminorm{\ehut(s)}^2 +\| \ehu_{t}(s)\|^2_{\LtwoGabs}\right) \ds.
		\end{multlined}
	\end{eq}
			Above we have also used the fact that we have  the bound $1+k \uh \geq 1-(r+r_{0})>0$ on $[0, \thh]$, and thus
			\begin{eq}
		\|\ehu_t(t)\|^2_{\Ltwo} +	\intt \|\ehu_{tt}(s)\|^2_{\Ltwo} \ds \lesssim 	\intt \|\ehu_{tt}(s)\|^2_{\Ltwo} \ds  \lesssim	 \intt \|\sqrt{1+k \uh}\ehu_{tt}\|^2_{\Ltwo} \ds .
	\end{eq}
	By then choosing $\eps$ sufficiently small in \eqref{tested11} so that the $\eps$ terms can be absorbed by the ($\tau$-independent) left-hand side terms  (recall that $\btau> b>0$) and then applying \Gronwall's inequality, we obtain
	\begin{eq}\label{tested12}
		&\begin{multlined}[t] 
			\frac{\tau}{2}\| \ehu_{tt}(t)\|^2_{\Ltwo}+  \intt \|\ehu_{tt}(s)\|^2_{\Ltwo} \ds  +	\|\ehu_t(t)\|^2_{\Ltwo} 
			+ | \ehu_t(t) |^2_{\dGnorm} \\+  \|\ehu_t(t)\|^2_{\LtwoGabs}  +  \tau \intt \|\ehu_{tt}(s)\|^2_{\LtwoGabs}\ds
			\lesssim
			h^{2q}.
		\end{multlined}
	\end{eq}
	We also have
	\begin{equation}
		\begin{aligned}
			\|\eh(t)\|^2_{\Ltwo} + |\eh(t)|^2_{\dGnorm} \lesssim T \left(\max_{t \in [0, \thh]}  \|\eh_t(t)\|^2_{ \Ltwo}+ \max_{t \in [0, \thh]} |\eh_t(t)|^2_{\dGnorm} \right) \lesssim h^{2q}.
		\end{aligned}
	\end{equation}
	Adding the above bound to \eqref{tested12} leads to $\|\eh(t)\|^2_{\calE} \lesssim h^{2q}$. We also note that $\Cprop$ is made independent of $\|k\|_{\Linf}$ by exploiting $\|k\|_{\Linf} \lesssim 1$.
\end{proof}

\subsection*{Step III: Extending the existence to the whole time interval} We now have all the necessary ingredients to prove Theorem~\ref{thm: main}.

\begin{proof}[proof of Theorem \ref{thm: main}]
We choose  $\Czero$ in \eqref{eq: def thh}, so that $C_0 > \Cprop$; cf.~Proposition~\ref{prop: a priori est}.  To prove that $(\uh(\thh), \uht(\thh), \uhtt(\thh)) \in \openset$, we note that
	\begin{eq}\label{eq: wellposedness estimate}
		\|k \uh(\thh) \|_{L^{\infty}(\Omega)} 
		\lesssim&  \|k (\uh-\Ih u)(\thh) \|_{L^{\infty}(\Omega)} + \|k \Ih u(\thh) \|_{L^{\infty}(\Omega)} \\
		\lesssim&\, \Linfnormk h^{-d/2}\| (\uh-\Ih u)(\thh) \|_{\Ltwo} + \| k u(\thh)\|_{\Linf} \\
		\lesssim&\, \Linfnormk C_0 h^{q-d/2}+r,
	\end{eq}
on account of Proposition~\ref{prop: a priori est}.  Choosing $\|k\|_{\Linf}$  sufficiently 
small relative to $\Czero$, yields
	\begin{eq}\label{eq: wellposedness estimate3}
		\| k \uh(\thh) \|_{L^{\infty}(\Omega)} < r_{0} + r < 1
	\end{eq}
We thus conclude that $(\uh(\thh),\uht(\thh), \uhtt(\thh)) \in \openset$.   Moreover, by Proposition~\ref{prop: a priori est}, we have
	\[
	\tnorm{\uh(t) - \Ih u(t)}\leq \Cprop h^{q} \leq  \Czero {h^{d/2}} \quad \text{for  }\ t \in [0, \thh]
	\]
	since $\Czero > \Cprop$. \\
	\indent If $\thh <T$, we can apply the local version of the Picard--Lindelöf theorem again at $t=\thh$ to conclude that the solution $\uh$ exists beyond $\thh$. Thus we must have $\thh = T$ and the error bound in Proposition~\ref{prop: a priori est} holds on $[0,T]$.\\
	\indent Since $\uh - u =  \ehu + \Ih u - u$, we can use the approximation properties of the interpolant to conclude that the same error estimate that holds for $\ehu=\uh - \Ih u$ also holds for $ \uh - u$.  This concludes the proof of Theorem~\ref{thm: main}.
\end{proof}
\section{Proof of Theorem~\ref{thm: tau limit}}  \label{Sec: proof 2}
In this section, we prove Theorem~\ref{thm: tau limit} and rigorously establish the connection between $\uh$ and $\uhtauzero$, where the latter solves, for all $\phih \in \Vhq$ and $t \in (0,T]$,
\begin{equation} \label{semi-discrete west} \tag{$P^{\tau = 0}_h$}
	\left\{	\begin{aligned}
		&(((1+k \uhtauzero)\uhttauzero)_t, \phih)_{\Ltwo}+\csq\ah(\uhtauzero, \phih)
		\\& \hspace*{6cm}+ \alpha( \uhttauzero, \phih)_{\LtwoGabs}\\[1mm]
		=&\, (\fhtauzero, \phih)_{\Ltwo} + (\ghabstauzero, \phih)_{\LtwoGabs}+ (\ghNtauzero, \phih)_{\LtwoGN}, \\[1mm] 
		& (\uhtauzero, \uhttauzero)\vert_{t=0}=(\uzeroh,  \uoneh) \in \left(\Vh\right)^2.
	\end{aligned} \right.
\end{equation}  
The following auxiliary result first establishes the well-posedness of  \eqref{semi-discrete west}.%
\begin{proposition}[Well-posedness of the limiting problem] \label{prop: West}
	Let $\csq$, $ b> 0$. Let the assumptions on $\triangles$ made in Section~\ref{sec: dG discretization} hold and let the polynomial degree be $\qp \geq d/2$, where $d \in \{1,2,3\}$. 
	
	Suppose $\utauzero \in H^2(0,T; H^{q+1}(\Omega))$ is the solution of the exact limiting Westervelt problem that satisfies the non-degeneracy condition \eqref{non-degeneracy}.    Let $\fhtauzero \in  C([0,T]; \Vh)$, $\ghabstauzero \in H^1(0,T; \VGhabs)$, and $\ghNtauzero \in H^1(0,T; \VGhN)$ satisfy the accuracy assumptions
	\begin{eq}
		& \|\ftauzero - \fhtauzero\|_{L^{2}(0,T;L^{2}(\Omega))} \lesssim h^{q},  \quad &&
		\|\gabstauzero- \ghabstauzero\|_{H^1(0,T;L^{2}(\Gabs))} \lesssim h^{q},  \\
		& \|\gNtauzero- \ghNtauzero\|_{H^1(0,T;L^{2}(\GN))} \lesssim h^{q+1}. &&
	\end{eq}
	 Let approximate acoustic initial conditions be chosen to interpolate the exact ones,
that is $
		(\uzeroh,  \uoneh) = (\Ih \uzero, \Ih \uone)$. Then there exists $\bark>0$, such that for all $\|k\|_{\Linf} \leq \bark$,  problem \eqref{semi-discrete west} has a unique  solution $\uhtauzero \in C^2([0,T]; \Vhqp)$ satisfying
	\begin{eq} \label{error bound pressure tnorm west}
		\begin{multlined}[t]
			\intt  \|\utauzero_{tt}(s)- \uhtauzero_{tt}(s)\|^2_{\Ltwo}\ds + \|\utauzero_t(t)- \uhtauzero_t(t)\|^2_{\Ltwo} \\
			+| \utauzero_t(t)- \uhtauzero_t(t) |^2_{\dGnorm}+ \| \uttauzero(t)- \uhtauzero_t(t)\|^2_{\LtwoGabs} \\
			+ \|\utauzero(t)- \uhtauzero(t)\|^2_{\Ltwo} 	+| \utauzero(t)- \uhtauzero(t) |^2_{\dGnorm} \lesssim h^{2q}
		\end{multlined}
	\end{eq}
	for $t \in [0,T]$.  Furthermore, there exists $r_0>0$, such that
	\begin{equation}
		1 + k \uhtauzero \geq 1-(r+r_0)>0 \ \text{ a.e. \quad and}  \qquad \|k\uhttauzero\|_{\LinfTLinf} \lesssim 1.
	\end{equation}
\end{proposition}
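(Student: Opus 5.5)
The proof will repeat the three-step argument of Theorem~\ref{thm: main}, now with $\tau=0$. This is the dG analysis of the Westervelt equation with absorbing conditions from \cite[Sec.~3]{dewit2026}; the only changes are the extra Neumann contribution and the variable $k$.

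\textbf{Step I (local existence).} We write \eqref{semi-discrete west} as a first-order system in $(\uhtauzero,\uhttauzero)$ using the operators $\Lambdah$, $\Ahsip$, $\Gh_{\textup{abs}}$, $\Gh_{\textup{N}}$. The resulting right-hand side is
\[
\Lambdah^{-1}(\uhtauzero)\bigl(\fhtauzero-\pi_h(k(\uhttauzero)^2)-\csq\Ahsip\uhtauzero-\alpha\Gh_{\textup{abs}}\uhttauzero+\Gh_{\textup{abs}}\ghabstauzero+\Gh_{\textup{N}}\ghNtauzero\bigr).
\]
We use the open set $\openset$ restricted to the first two components; Lemma~\ref{lem: p0 in D} applies verbatim to show $\Ih\uzero$ lies in it. The local Picard--Lindel\"of theorem then gives a solution on $[0,\thh]$. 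Here $\thh$ is defined as in \eqref{eq: def thh}, with the threshold $C_0h^{d/2}$ imposed on the Westervelt energy appearing on the left of \eqref{error bound pressure tnorm west}.

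\textbf{Step II (uniform bounds on $[0,\thh]$).} We split the error as $\ehu=\uhtauzero-\Ih\utauzero$ plus the interpolation part, derive the error equation (that is, \eqref{error eq} with $\tau=0$, $\btau=b$), and test with $\phih=\ehutt$. After integrating in time, the left-hand side controls
\[
\intt\|\sqrt{1+k\uhtauzero}\,\ehutt\|^2_{\Ltwo}\ds+\tfrac b2\dgseminorm{\ehut(t)}^2+\tfrac\alpha2\|\ehut(t)\|^2_{\LtwoGabs}.
\]
The remaining terms are handled as follows.
\begin{itemize}
\item The $\csq\ah(\ehu,\ehutt)$ term is integrated by parts in time, exactly as in Proposition~\ref{prop: a priori est}.
\item The defect is estimated as in Lemma~\ref{lemma: defect}, dropping all $\tau$ contributions. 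It needs only $\utauzero\in H^2(0,T;H^{q+1}(\Omega))$ and the data accuracy assumptions, including the $h^{q+1}$ accuracy for the Neumann data, which is needed for the inverse trace step \eqref{est ghN}.
\item The nonlinear terms $k((\uhttauzero+\uttauzero)\ehut+\utauzero_{tt}\ehu)$ require $\|k\uhttauzero\|_{L^\infty}\lesssim1$. This follows as in Lemma~\ref{lem: unif bound uht}, using the inverse inequality and small $\bark$.
\end{itemize}
With the lower bound $1+k\uhtauzero\ge1-(r+r_0)$, absorption of the $\eps$-terms and Gr\"onwall give the bound $h^{2q}$ for the discrete error. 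Integrating $\ehut$ in time yields the $\Ltwo$ and $\dgseminorm{\cdot}$ bounds on $\ehu$.

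\textbf{Step III (extension to $[0,T]$).} We pick $C_0$ larger than the Gr\"onwall constant. Arguing as in \eqref{eq: wellposedness estimate}, the inverse inequality gives
\[
\|k\uhtauzero(\thh)\|_{\Linf}\lesssim\bark C_0h^{q-d/2}+r<r+r_0,
\]
so the solution stays in $\openset$ and $\thh=T$. The interpolation estimates then transfer the bound to $\utauzero-\uhtauzero$. The non-degeneracy statement $1+k\uhtauzero\ge1-(r+r_0)$ and $\|k\uhttauzero\|_{\LinfTLinf}\lesssim1$ are by-products of Steps II and III.

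The main obstacle is the case $q=d/2$, where $h^{q-d/2}=1$ and no power of $h$ helps. Here closure relies entirely on the smallness of $\bark$ relative to $C_0$ and $\|\utauzero\|$. I would verify that $C_0$ can be fixed first, independently of $\bark$ once $\bark\le1$ (as noted at the end of Proposition~\ref{prop: a priori est}), and only then shrink $\bark$.
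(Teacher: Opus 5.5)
Your proposal is correct and follows the paper's route: the paper's proof reruns Steps~I--III of Section~\ref{Sec: proof} with $\tau=0$, using a two-component first-order system for local existence and treating the Neumann terms as in Lemma~\ref{lemma: defect}. Two small adjustments are needed. First, your Step~I right-hand side should also contain $-b\Ahsip\uhttauzero$: the displayed limiting problem drops the $b$-term, but your own Step~II control of $\dgseminorm{\ehut(t)}$ needs it, as does the paper's proof of Theorem~\ref{thm: tau limit}. Second, when you shrink $\bark$ you must also enforce $\bark\,\Czero\lesssim 1$, not merely $\bark\le 1$. On $[0,\thh]$ the bound $\|k\uhttauzero\|_{\Linf}\lesssim 1$ that enters the Gr\"onwall constant comes from the threshold $\Czero h^{d/2}$ via the inverse inequality, so without that condition the constant is not independent of $\Czero$.
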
 \vspace*{-3mm}
\begin{proof}
	The limiting problem can be analyzed by adapting the arguments of Section~\ref{Sec: proof}, since the estimates established there remain valid for $\tau=0$. The only structural modification is the reformulation of the second-order Westervelt equation as a first-order system with two unknowns. This local existence proof follows analogously to~\cite[Sec.~3.1]{dewit2026}, while the additional Neumann boundary terms are handled exactly as in Section~\ref{Sec: proof}. We therefore omit the details.
\end{proof}
\begin{proof}[proof of Theorem~\ref{thm: tau limit}]
	The difference $\udiff = \uh -\uhtauzero$ satisfies the following time-integrated problem:
	\begin{equation} \label{diff} 
		\begin{aligned}
			&\begin{multlined}[t]  \intT \Bigl \{ (((1+k \uh)\utdiff)_t, \phih)_{\Ltwo}+\ah(\csq\udiff+b \utdiff, \phih)+ \alpha( \utdiff, \phih)_{\LtwoGabs} \\+ ((k \udiff \uhttauzero)_t, \phih)_{\Ltwo} \Bigr\} \ds
			\end{multlined} \\
			=&\, \begin{multlined}[t]-\tau \intT \Big[(\uhttt,\phih)_{\Ltwo}
				+ \csq \ah(\uht,\phih)
				+ \alpha (\uhtt,\phih)_{\LtwoGabs}\Big] \ds \\ \hspace*{1cm}
				+ \intT (\fh-\fhtauzero, \phih)_{\Ltwo} \ds +\intT(\ghabs-\ghabstauzero, \phih)_{\LtwoGabs} \ds
			\end{multlined}
		\end{aligned} 
	\end{equation}
	for all    $\phih \in L^2(0,T; \Vh)$, with $(\udiff, \utdiff)\vert_{t=0}=(0,0)$.
	A canonical testing function for second-order wave problems of this nature would be $\utdiff$.	However, testing with $\utdiff$ would lead to issues with estimating the right-hand side term $-\intt (\tau \uhttt, \utdiff)_{\Ltwo}\ds$.  We could integrate it by parts in time, but \eqref{error bound pressure tnorm} only provides us with a $\tau$ uniform bound on $\sqrt{\tau} \|\uhtt(t)\|_{\Ltwo}$ and not on $\tau  \|\uhtt(t)\|_{\Ltwo}$.  Additionally,  further testing would be needed to get a handle on $\uttdiff$.
	
	To recover the optimal convergence rate in $\tau$, we test instead with a time-nonlocal test function $\phih$ defined as
	\begin{eq}
		\phih(t) = \begin{cases}
			\int_t^{t'} \udiff(s) \ds, &0 \leq t \leq t' \\
			0,  \quad &t' \leq t \leq T.
		\end{cases}
	\end{eq}
	We can then exploit the fact that  $\phih_t = - \udiff$ if $0 \leq t \leq t'$ and $\phih_t =0$ otherwise.  Indeed, after testing \eqref{diff} with $\phi_h$ and integrating by parts in time, we obtain
	\begin{align}
		\int_0^{t'} (((1+k \uh)\utdiff)_t,  \phih)_{\Ltwo}\ds =&\, 		\int_0^{t'} ((1+k \uh)\utdiff,  -\phih_t)_{\Ltwo}\ds\\
		=&\, \int_0^{t'} ((1+k \uh)\utdiff, \udiff)_{\Ltwo}\ds
	\end{align}
	since $\phih(t')= 0$ and $\udiff_t(0) = 0$. 
	We then integrate by parts in time again, which yields
	\begin{align}
		\int_0^{t'} ((1+k \uh)\utdiff, \udiff)_{\Ltwo}\ds
		&=
		\frac12 \bigl((1+k \uh(t'))\udiff(t'), \udiff(t')\bigr)_{\Ltwo} 
		-\frac12 \int_0^{t'} (k \uht\, \udiff, \udiff)_{\Ltwo}\ds.
	\end{align}
	Consequently, we conclude that
	\begin{equation}
		\int_0^{t'} (((1+k \uh)\utdiff)_t, \phih)\ds
		=
		\frac12 \|\sqrt{1+k\uh(t')}\,\udiff(t')\|_{\Ltwo}^2
		-\frac12 \int_0^{t'} (k \uht\,\udiff,\udiff)_{\Ltwo}\ds.
	\end{equation}
	We next treat the $k$ terms. Integrating by parts
\begin{eq}
	- \int_0^{t'}  ((k \udiff \uhttauzero)_t, \phih)_{\Ltwo} \ds =  - \int_0^{t'}  (k \udiff \uhttauzero, \udiff)_{\Ltwo} \ds
\end{eq}
and employing the fact that $\| \uht\|_{\LinfLinf} \lesssim 1$  and $\| \uhttauzero\|_{\LinfLinf} \lesssim 1$, we infer that 
\begin{eq}
	\frac12	\int_0^{t'}	(k \uht \udiff, \udiff)_{\Ltwo} \ds - \int_0^{t'}  (k \udiff \uhttauzero, \udiff)_{\Ltwo} \ds
	\lesssim \int_0^{t'}\|k\|_{\Linf} \|\udiff\|^2_{\Ltwo} \ds;
\end{eq}
the last term will be handled at the end using \Gronwall's inequality. 

Next, using the fact that $\udiff = - \phih_t$ on $(0,t')$ and integrating by parts in time, we obtain
	\begin{eq} \label{interim est} 
		\int_0^{t'}	\ah(\csq\udiff+b \utdiff, \phih) \ds = &\, - \csq \int_0^{t'}	\ah(\phih_t, \phih) \ds+ b \int_0^{t'}	\ah( \utdiff, \phih) \ds \\
		=&\, \frac{\csq}{2} \ah(\phih(0), \phih(0))+b \int_0^{t'}	\ah( \udiff, \udiff) \ds\\
		\gtrsim&\, 	 \frac{\csq}{2} 	\dgseminorm{\phih(0)}^2+b \int_0^{t'} \dgseminorm{\udiff(s)}^2 \ds,
	\end{eq}
	where we have also invoked the coercivity of $\ah(\cdot, \cdot)$.  
	
	We can rewrite the $\alpha$ term on the left-hand side of \eqref{diff}  by integrating by parts in time:
	\begin{eq}
		\alpha \int_0^{t'} ( \utdiff, \phih)_{\LtwoGabs} \ds = \alpha \int_0^{t'} ( \udiff, \udiff)_{\LtwoGabs} \ds.
	\end{eq}
	The $\fh-\fhtauzero$ and $\ghabs-\ghabstauzero$ terms can be handled using H\"older's and Young's inequalities and  assumption  \eqref{tau conv assumption fh ghabs} to obtain
\begin{eq}
	 &\intT (\fh-\fhtauzero, \phih)_{\Ltwo} \ds +\intT(\ghabs-\ghabstauzero, \phih)_{\LtwoGabs} \ds \\
	 \lesssim&\, \begin{multlined}[t]
	 \tau^2
	 				+  \int_0^{t'}\|\phih(s)\|_{\Ltwo}^2\ds +  \eps \int_0^{t'}\|\phih(s)\|_{\LtwoGabs}^2\ds
	 			\end{multlined}
\end{eq}
	for any $\eps>0$.  The last two terms above can be further estimated as follows:
	\begin{eq}
		& \int_0^{t'}\|\phih(s)\|_{\Ltwo}^2\ds +  \eps \int_0^{t'}\|\phih(s)\|_{\LtwoGabs}^2\ds \\
		\lesssim&\, T  \left(\max_{t \in [0, t']} \|\phih\|_{\Ltwo}^2+ \eps\max_{t \in [0,t']} \|\phih\|_{\LtwoGabs}^2 \right) \\
		\lesssim&\, T  \left( \int_0^{t'} \|\udiff(s)\|^2_{\Ltwo} \ds +\eps  \int_0^{t'} \|\udiff(s)\|^2_{\LtwoGabs} \ds\right),
	\end{eq}
	and at the end handled via \Gronwall's inequality or absorbed for small enough $\eps$.
	
It remains to bound the $\tau$ terms on the right-hand side of \eqref{diff}. To handle the $\tau \uhttt$ term, we integrate by parts in time and employ H\"older's and Young's inequalities:
	\begin{eq}
		\int_0^{t'}	 -(\tau \uhttt, \phih)_{\Ltwo} \ds =&\,  (\tau \uhtt(0), \phih(0))_{\Ltwo}+ \int_0^{t'} (\tau \uhtt, \udiff)_{\Ltwo} \ds  \\
		\lesssim&\,\begin{multlined}[t]  \tau^2 \|\utwoh \|^2_{\Ltwo} +\eps  \|\phih(0)\|^2_{\Ltwo}+\tau^2 \|\uhtt\|^2_{\LtwoTLtwo}\\+\int_0^{t'}\|\udiff(s)\|^2_{\Ltwo}\ds.
		\end{multlined}
	\end{eq}
	Here it is crucial that we have the uniform bound $\|\uhtt\|_{\LtwoTLtwo} \leq C$ thanks to Theorem~\ref{thm: main}. Similarly,  
	\begin{eq}
		-\tau \csq  \int_0^{t'}  \ah(\uht, \phih)\ds =&\,  \tau \csq  \ah(\uh(0), \phih(0)) + \tau \csq  \int_0^{t'}  \ah(\uh, \udiff)\ds \\
		\lesssim&\, \tau^2 \dgseminorm{\uzeroh}^2+ \eps \dgseminorm{\phih(0)}^2 +\tau^2 \int_0^{t'} \dgseminorm{\uh(s)}^2\ds+\int_0^{t'} \dgseminorm{\udiff(s)}^2\ds,
	\end{eq}
	where the $\phih(0)$ term can be absorbed by the term resulting from \eqref{interim est} provided $\eps$ is sufficiently small. 
	The final $\tau$ term on the right-hand side of \eqref{diff} can again be tackled via integration by parts in time and H\"older's and Young's inequalities:
	\begin{eq}
		&- \alpha \tau  \int_0^{t'}( \uhtt, \phih)_{\LtwoGabs} \ds \\
		=&\, \alpha \tau (\uht(0), \phih(0))_{\LtwoGabs}- \alpha \tau  \int_0^{t'}( \uht, \udiff)_{\LtwoGabs} \ds \\
		\lesssim&\, \tau^2 \|\uoneh\|^2_{\LtwoGabs}+ \eps \|\phih(0)\|^2_{\LtwoGabs}+ \tau^2 \|\uht\|^2_{L^2(0,T; \LtwoGabs)} + \eps \int_0^{t'} \|\udiff\|^2_{\LtwoGabs} \ds.
	\end{eq}
	The second term on the right can be further bounded as follows: 
	\begin{eq}
		\eps  \|\phih(0)\|^2_{\LtwoGabs} = \eps  \left| \int_0^{t'} \udiff(s) \ds \right|^2_{\LtwoGabs}  \lesssim&\, \eps \left(\int_0^{t'} \|\udiff(s)\|_{\LtwoGabs} \ds \right)^2\\
		  \lesssim&\, \eps T \int_0^{t'} \|\udiff(s)\|^2_{\LtwoGabs} \ds.
	\end{eq} 
	
	Altogether, by employing these estimates in \eqref{diff} and reducing $\eps$ so that the $\eps$ terms can be absorbed, we arrive at 
	\begin{eq} \label{est 2}
		&\|\udiff(t')\|^2_{\Ltwo } + \dgseminorm{\phih(0)}^2+  \int_0^{t'}	\dgseminorm{\udiff(s)}^2 \ds +  \int_0^{t'} \|\udiff(s)\|^2_{\LtwoGabs} \ds \\
		\lesssim&\, \begin{multlined}[t] \tau^2 \left( \dgseminorm{\uzeroh}^2+  \|\uoneh\|^2_{\LtwoGabs} +  \|\utwoh \|^2_{\Ltwo}\right)+\tau^2+ \int_0^{t'} \|\udiff(s)\|^2_{\Ltwo}\ds \\
			+\tau^2\left( \int_0^{t'} \dgseminorm{\uh(s)}^2\ds	+ \int_0^{t'} \|\uht(s)\|^2_{\LtwoGabs}\ds 	+ \|\uhtt\|^2_{\LtwoTLtwo}\right).
		\end{multlined}
	\end{eq}
	The $\tau$-uniform bounds on $ \int_0^{t'} \dgseminorm{\uh(s)}^2\ds$, $\|\uht\|_{L^2(0,T; \LtwoGabs)}$, and $\|\uhtt\|^2_{\LtwoTLtwo}$ from Theorem~\ref{thm: main}, together with an application of \Gronwall's inequality now yield the claim.
\end{proof}
\bibliography{references}{}
  \bibliographystyle{siam} 
\end{document}